\theoremstyle{plain}
\newtheorem{thm}{Theorem}
\newtheorem*{mainthm1}{Theorem~1}
\newtheorem*{mainthm2}{Theorem~2}
\newtheorem*{VAL}{Vizing's Adjacency Lemma}
\newtheorem{prop}{Proposition}
\newtheorem{lem}[prop]{Lemma}
\newtheorem{cor}[prop]{Corollary}
\newtheorem{example}{Example}
\newtheorem{clm}{Claim}
\theoremstyle{definition}
\theoremstyle{remark}
\newcommand{\set}[1]{\left\{ #1 \right\}}
\def\ch{{\rm ch}}
\def\hajos{Haj\'{o}s}
\tikzstyle{labeledThree}=[shape = circle, minimum size = 6pt, inner sep = 1.2pt, draw]
\tikzstyle{labeledTwo}=[shape = rectangle, minimum size = 8pt, inner sep = 2.2pt, draw]
\newcommand{\GG}[1]{}
\author{Daniel W. Cranston\thanks{Department of Mathematics and Applied
Mathematics, Viriginia Commonwealth University, Richmond, VA;
\texttt{dcranston@vcu.edu}; 
The first author's research is partially supported by NSA Grant
H98230-15-1-0013.}
\and
Landon Rabern\thanks{LBD Data Solutions, Lancaster, PA;
\texttt{landon.rabern@gmail.com}}
}
\title{Subcubic edge-chromatic critical graphs\\ have many edges}
\begin{document}
\maketitle
\begin{abstract}
We consider graphs $G$ with $\Delta=3$ such that $\chi'(G)=4$ and $\chi'(G-e)=3$ for
every edge $e$, so-called \emph{critical} graphs.  Jakobsen noted that the
Petersen graph with a vertex deleted,
$P^*$, is such a graph and has average degree only $\frac83$.  He showed
that every critical graph has average degree at least $\frac83$, and asked if
$P^*$ is the only graph where equality holds.  A result of Cariolaro and
Cariolaro shows that this is true.  We strengthen this average degree bound further.
Our main result is that if $G$ is a subcubic
critical graph other than $P^*$, then 
%but is neither $P^*$ nor the \hajos\ join of two copies of $P^*$, then  
$G$ has average degree at least
$\frac{46}{17}\approx2.706$.
This bound is best possible, as shown by the \hajos\ join of two copies of
$P^*$.
\end{abstract}
\section{Introduction}

By a \emph{coloring} of a graph $G$, we mean
a \emph{proper edge-coloring}, which assigns a color to each edge in
$E(G)$ such that edges with a common endpoint receive distinct colors.  
The minimum number of colors needed for a proper edge-coloring is the
\emph{edge-chromatic number of $G$}, denoted $\chi'(G)$.  
Given a (partial) coloring of $G$,
if a color $c$ is used on an edge incident to a vertex $v$, then 
$v$ \emph{sees} $c$; otherwise $v$ \emph{misses} $c$.
The maximum degree of
$G$ is denoted $\Delta(G)$, or simply $\Delta$ when $G$ is clear from context. 
Its \emph{average degree}, $2|E(G)|/|V(G)|$, is denoted $a(G)$.
Note that always $\Delta(G)\le \chi'(G)$.  Vizing famously proved that always
$\chi'(G)\le \Delta(G)+1$.  A graph is
\emph{edge-chromatic critical} (also \emph{$\Delta$-critical}, or simply
\emph{critical}) if $\chi'(G)>\Delta(G)$ but $\chi'(G-e)=\Delta(G)$ for every
edge $e$.  
%For simplicity, we assume that $G$ is not the star $K_{1,\Delta(G)+1}$. 
A vertex of degree $k$ is a \emph{$k$-vertex}.  
If $v$ is a $k$-vertex and $v$ is adjacent to $u$, then $v$ is a
\emph{$k$-neighbor} of $u$.  The \emph{order} of $G$ is $|V(G)|$.

Vizing~\cite{Vizing65,Vizing68} was the first to seek a lower bound on the
number of edges in a
critical graph, in terms of its order.  This problem is now widely
studied, for a large range of maximum degrees $\Delta$.  Woodall gives
a nice history of this work, in the introduction to~\cite{Woodall08}.
In this paper, we study the problem for subcubic graphs, i.e., when $\Delta=3$.

It is easy to check that
2-critical graphs are precisely odd cycles, which are completely understood.  So
the first non-trivial case is 3-critical graphs.
Let $P^*$ denote the Petersen graph with a vertex deleted.
Jakobsen~\cite{Jakobsen73} observed that $P^*$ is 3-critical and has average
degree $\frac83$.
In the same paper he asked if every 3-critical graph $G$ has $a(G)\ge \frac83$. 
A year later~\cite{Jakobsen74}, he answered this question
affirmatively. 
However, in this second paper Jakobsen asked whether 
$P^*$ is the only graph for which the bound holds with equality.
The answer is yes.  This follows easily from a result of Cariolaro and
Cariolaro~\cite{C&C}, as we now show.

\begin{prop}
If $G$ is a 3-critical graph of order $n$, then $|E(G)|\ge \frac43n$ and
equality holds only if $G=P^*$.
\end{prop}
\begin{proof}
Let $G$ be a 3-critical graph of order $n$.  Let $n_k$ denote the number of
$k$-vertices in $G$, for each $k\in\{2,3\}$; let $G^{(3)}$ denote the subgraph
induced by all 3-vertices.  An easy consequence of Vizing's Adjacency Lemma (see
the start of Section~\ref{sec:discharging}) is
that each 2-vertex is adjacent to two 3-vertices and each 3-vertex is adjacent
to at most one 2-vertex.  Thus $n_3\ge 2n_2$ and equality implies
$\Delta(G^{(3)})\le 2$.  Letting $m=|E(G)|$, we get
$$
\frac{2m}{n}=\frac1n(2n_2+3n_3)=2+\frac{n_3}n\ge \frac83.
$$
The last equality is equivalent to $\frac{n_3}n\ge \frac23$.  This is equivalent
to $3n_3\ge 2n=2(n_2+n_3)$, and hence to $n_3\ge 2n_2$.  So if
$\frac{2m}n=\frac83$, then $n_3=2n_2$, and thus $\Delta(G^{(3)})\le 2$.  Now a
result of Cariolaro and Cariolaro~\cite{C&C} (see
also~\cite[Theorem~4.11]{StiebitzSTF12}) implies that $G=P^*$.
\end{proof}

A natural extension of this question is
to find the maximum $\alpha$ such that every 3-critical graph other than
$P^*$ has average degree at least $2+\alpha$.  A complementary question 
is to find the minimum $\beta$ such that
there exists an infinite sequence of 3-critical
graphs with average degree at most $2+\beta$.  The first progress toward
answering this question is due to Fiorini and Wilson~\cite[p. 43]{FioriniWilson},
who constructed an infinite family of 3-critical graphs with average degree
approaching $2+\frac34$ from below.  
Woodall~\cite[p. 815]{Woodall08} gave another family with the same number of
edges and vertices; see Figure~\ref{woodall-examples}.
Before presenting this construction, we need a definition.

Let $G_1$ and $G_2$ be two graphs with $v_1v_2\in E(G_1)$ and $v_3v_4\in E(G_2)$.
A \emph{\hajos\ join} of $G_1$ and $G_2$ is formed from the disjoint union of
$G_1-v_1v_2$ and $G_2-v_3v_4$ by identifying vertices $v_1$ and $v_3$ and
adding the edge $v_2v_4$.

\begin{lem}
If $G_1$ and $G_2$ are $k$-critical graphs, and $G$ is a \hajos\ join of $G_1$ and
$G_2$ that has maximum degree $k$, then $G$ is also $k$-critical.
\label{critical}
\end{lem}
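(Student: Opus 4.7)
The plan is to verify both defining properties of $k$-criticality for $G$: that $\chi'(G)>k$, and that $G-e$ is $k$-edge-colorable for every edge $e$ of $G$. Let $w$ denote the identified vertex (from $v_1$ and $v_3$), and note that $d_G(w)=(d_{G_1}(v_1)-1)+(d_{G_2}(v_3)-1)\le k$ will be the workhorse degree inequality. (I read the second ``$G_1$'' in the statement as a typo for ``$G_2$''.)

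For the first property, suppose toward contradiction that $\phi$ is a proper $k$-edge-coloring of $G$. The $G_1$-side and $G_2$-side edges at $w$ use disjoint color sets $S_1,S_2$, so the color $c:=\phi(v_2v_4)$ of the new edge lies in at most one of them; by symmetry assume $c\notin S_1$. Define $\phi_1$ on $G_1$ by copying $\phi$ on $E(G_1)\setminus\{v_1v_2\}$ and setting $\phi_1(v_1v_2):=c$. This is proper at $v_1$ because $c\notin S_1$, and proper at $v_2$ because in $\phi$ the color $c$ appears at $v_2$ only on the edge $v_2v_4$, which is not an edge of $G_1$. Thus $\phi_1$ is a proper $k$-edge-coloring of $G_1$, contradicting $\chi'(G_1)>k$.

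For the second property, first handle the easy case $e=v_2v_4$. Take $k$-edge-colorings $\phi_1$ of $G_1-v_1v_2$ and $\phi_2$ of $G_2-v_3v_4$ (given by the criticality of $G_1,G_2$). By permuting the palette of $\phi_2$, we can make the colors it uses at $v_3$ disjoint from those $\phi_1$ uses at $v_1$; the two set sizes sum to at most $d_G(w)\le k$, so this is feasible. The combined coloring is a proper $k$-edge-coloring of $G-e$.

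The harder case is $e\in E(G_1)\setminus\{v_1v_2\}$, the symmetric case being analogous. Choose $\phi_1$ a $k$-edge-coloring of $G_1-e$ and $\phi_2$ a $k$-edge-coloring of $G_2-v_3v_4$, and set $a:=\phi_1(v_1v_2)$. Let $A$ be the colors $\phi_1$ uses at $v_1$ on edges of $G_1-e-v_1v_2$, $X$ the colors $\phi_2$ uses at $v_3$, and $Y$ the colors $\phi_2$ uses at $v_4$. The criticality of $G_2$ forces $X\cup Y=[k]$, for otherwise a color missing at both $v_3$ and $v_4$ in $\phi_2$ would extend $\phi_2$ to a $k$-edge-coloring of $G_2$. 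Hence $[k]\setminus Y\subseteq X$, so we can choose some $x_0\in X\setminus Y$. We now seek a permutation $\sigma$ of $[k]$ with $\sigma(x_0)=a$ and $\sigma(X)\cap A=\emptyset$; since $a\notin A$ and $|X|+|A|\le d_G(w)\le k$, such a $\sigma$ exists. Applying $\sigma$ to $\phi_2$ and combining with $\phi_1|_{G_1-e-v_1v_2}$ and the color $a$ on $v_2v_4$ yields a proper $k$-edge-coloring of $G-e$: at $w$ the sets $A$ and $\sigma(X)$ are disjoint, at $v_2$ the color $a$ does not appear on any other edge in $\phi_1$, and at $v_4$ the color $a=\sigma(x_0)$ does not lie in $\sigma(Y)$ because $x_0\notin Y$. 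The principal obstacle is designing $\sigma$ to satisfy the $w$-constraint and the $v_2v_4$-constraint simultaneously; the critical ingredient is the use of $G_2$'s criticality to force $X\cup Y=[k]$, which supplies an $x_0$ able to carry $a$ into the available slot.
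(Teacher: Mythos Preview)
Your proof is correct. The paper does not actually prove this lemma: it attributes the result to Jakobsen and writes ``It is a straightforward exercise, so we omit the details,'' pointing the reader to Fiorini \& Wilson and to Stiebitz et al.\ for a full argument. What you have written is essentially the standard proof one finds in those references. The main nontrivial step is the case $e\in E(G_i)\setminus\{v_1v_2\}$, and your handling of it---using the criticality of $G_2$ to force $X\cup Y=[k]$, then picking $x_0\in X\setminus Y$ and a permutation $\sigma$ sending $x_0$ to $a$ while keeping $\sigma(X)$ disjoint from $A$---is exactly the right idea. One tiny point you leave implicit: to conclude $\chi'(G-e)=k$ (rather than merely $\le k$) you also need $\chi'(G-e)\ge k$, which follows since adding back $e$ can raise $\chi'$ by at most one and $\chi'(G)=k+1$.
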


This is an old result of Jakobsen~\cite{Jakobsen73}.  It is a
straightforward exercise, so we omit the details,  which are available in
Fiorini \& Wilson~\cite[p. 82--83]{FioriniW77} and Stiebitz et al.~\cite[p.
94]{StiebitzSTF12}.

\begin{cor}
%If $G$ is subcubic and is a \hajos\ join of two subcubic graphs $H$, which is
%3-critical, and $J$, then $G$ is 3-critical if and only if $J$ is 3-critical.
Let $G_1$ and $G_2$ be subcubic graphs, and let $G_1$ be 3-critical.  If $G$ is
a subcubic graph that is a \hajos\ join of $G_1$ and $G_2$, then $G$ is
3-critical if and only if $G_2$ is 3-critical.
\label{hajos-cor}
\end{cor}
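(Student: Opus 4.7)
The plan is to prove the two directions separately, writing $v := v_1 = v_3$ for the identified vertex of $G$ and $u := v_2, w := v_4$ for the endpoints of the added edge $uw$. The forward direction ($G_2$ 3-critical implies $G$ 3-critical) is immediate from Lemma~\ref{critical}: both $G_1$ and $G_2$ are 3-critical, $G$ is subcubic by hypothesis, and $\Delta(G) = 3$ follows from 3-criticality of $G_1$ (which forces some vertex of degree 3 to survive into $G$), so the lemma applies.

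For the backward direction, assume $G$ is 3-critical. First I would show $\chi'(G_2) = 4$ by contradiction. Given a hypothetical proper 3-edge-coloring $\phi_2$ of $G_2$, set $c := \phi_2(v_3 v_4)$, and pick a proper 3-edge-coloring $\phi_1$ of $G_1 - v_1 v_2$ (which exists because $G_1$ is 3-critical). I would assemble a proper 3-edge-coloring of $G$ by placing $\phi_1$ on the $G_1$-side of $G$, $\phi_2|_{G_2 - v_3 v_4}$ on the $G_2$-side, and color $c$ on $uw$. Compatibility at $w$ is automatic from $\phi_2(v_3 v_4) = c$; the remaining tasks are to arrange, at $v$, that the colors used by $\phi_1$ at $v_1$ are disjoint from those used by $\phi_2$ at $v_3$ other than $c$, and, at $u$, that $c$ is missing at $v_2$ in $\phi_1$. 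Since $G_1$ is 3-critical, every 3-coloring of $G_1 - v_1 v_2$ has disjoint missing-color sets at $v_1$ and $v_2$, so $\deg_{G_1}(v_1) + \deg_{G_1}(v_2) \ge 5$; combined with the subcubic inequality $\deg_{G_1}(v_1) + \deg_{G_2}(v_3) \le 5$, a short case analysis on $(\deg_{G_1}(v_1), \deg_{G_1}(v_2))$ followed by a permutation of colors in $\phi_1$ handles existence. The resulting 3-coloring of $G$ contradicts $\chi'(G) = 4$, so $\chi'(G_2) = 4$.

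Next I would verify $\chi'(G_2 - e) \le 3$ for every $e \in E(G_2)$. If $e = v_3 v_4$, a proper 3-edge-coloring of $G - uw$ (which exists since $G$ is 3-critical) restricts, via the identification $v_3 = v$, to a proper 3-edge-coloring of $G_2 - v_3 v_4 = G_2 - e$. If $e \ne v_3 v_4$, let $e'$ denote the image of $e$ in $G$ and argue by contradiction: suppose $\chi'(G_2 - e) = 4$, so every 3-edge-coloring of $G_2 - e - v_3 v_4$ has disjoint missing-color sets $A_2^*, B_2^*$ at $v_3, v_4$. Let $\phi$ be a proper 3-edge-coloring of $G - e'$ (exists since $G$ is 3-critical), set $c^* := \phi(uw)$, and let $\phi_1^* := \phi|_{G_1 - v_1 v_2}$, $\phi_2^* := \phi|_{G_2 - e - v_3 v_4}$. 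By 3-criticality of $G_1$, the missing-color sets $A_1^*, B_1^*$ at $v_1, v_2$ in $\phi_1^*$ are disjoint, so $B_1^*$ is contained in the color-set $S_1$ used at $v_1$. Compatibility of $\phi$ at $v$ gives $S_1 \subseteq A_2^*$; compatibility at $u$ and $w$ gives $c^* \in B_1^*$ and $c^* \in B_2^*$. Chaining yields $c^* \in B_1^* \subseteq S_1 \subseteq A_2^*$ and $c^* \in B_2^*$, so $c^* \in A_2^* \cap B_2^* = \emptyset$, a contradiction.

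The main obstacle is the case analysis in the $\chi'(G_2) = 4$ step: one has to verify, for each legitimate configuration of the four degrees $(\deg_{G_1}(v_1), \deg_{G_1}(v_2), \deg_{G_2}(v_3), \deg_{G_2}(v_4))$, that some permutation of colors in $\phi_1$ satisfies both compatibility conditions at $v$ and at $u$ simultaneously. The Step 2 argument in the subcase $e \ne v_3 v_4$, by contrast, is a clean chain of inclusions driven by the two ``disjoint missing-color'' properties coming from 3-criticality of $G_1$ and the supposed non-3-colorability of $G_2 - e$.
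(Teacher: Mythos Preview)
Your proposal is correct and follows the same overall strategy as the paper, but with considerably more detail. The paper's proof is two sentences: the ``if'' direction is Lemma~\ref{critical}, and for the ``only if'' direction the paper simply says to assume $\chi'(G_2)=3$ and build a 3-coloring of $G$ from 3-colorings of $G_2$ and $G_1-v_1v_2$. That is exactly your Step~1, though the paper leaves the permutation-of-colors compatibility check (your case analysis on $(\deg_{G_1}(v_1),\deg_{G_1}(v_2))$) entirely implicit.

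Your Step~2, verifying $\chi'(G_2-e)\le 3$ for every $e$, is not addressed at all in the paper's sketch; the paper evidently treats the criticality of $G_2$ as routine once $\chi'(G_2)=4$ is known. Your argument for the subcase $e\ne v_3v_4$ --- chaining $c^*\in B_1^*\subseteq S_1\subseteq A_2^*$ against $c^*\in B_2^*$ and $A_2^*\cap B_2^*=\emptyset$ --- is a genuinely clean way to close that gap, exploiting simultaneously the criticality of $G_1$ and the assumed non-3-colorability of $G_2-e$. So your write-up is strictly more complete than the paper's, while remaining on the same conceptual track.
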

\begin{proof}
The ``if'' direction follows immediately from the previous lemma.

To prove the ``only if'' direction, assume that $G_2$ is not 3-critical. Either
$\chi'(G_2)=4$ or $\chi'(G_2)\le 3$;
first assume that $\chi'(G_2)=4$.  Since $G_2$ is not 3-critical,
there exists $e\in E(G_2)$ such that $\chi'(G_2-e)=4$.
Suppose that $\chi'(G_2-v_3v_4)=4$.  Now $\chi'(G)\ge 4$, since
$G_2-v_3v_4\subseteq G$.  Further, $G$ is not 3-critical, since
$G_2-v_3v_4\subseteq G-e$ for every $e\in E(G_1)-v_1v_2$.  So suppose
there exists a 3-critical subgraph $J\subsetneq G_2$ such that $v_3v_4\in E(J)$.
By Lemma~\ref{critical}, the \hajos\ join of $G_1$ and $J$ is 3-critical; but
this is a proper subgraph of $G$, so $G$ is not 3-critical.

Instead we assume that $\chi'(G_2)\le3$.
Let $C_1$ be a 3-coloring of $G_1-v_1v_2$ (which exists, by criticality); by
symmetry among colors, assume
that $v_2$ misses color $x$.  Note that $v_1$ must see $x$, since
otherwise we get a 3-coloring of $G_1$.  If $d_{G_1-v_1v_2}(v_1)=2$, then assume
also that $v_1$ sees color $y$.
%construct a 3-coloring of $G$ from 3-colorings of $G_2$ and $G_1-v_1v_2$. %$\qed$
Let $C_2$ be a 3-coloring of $G_2$. By symmetry, assume that $v_3v_4$ uses color
$x$; if $d_{G_2}(v_3)=2$, then assume also by symmetry that $v_3$ sees color
$z$.  To get a 3-coloring of $G$, use $C_1$ on $G-v_1v_2$, use $C_2$ on
$G_2-v_3v_4$, and use color $x$ on $v_2v_4$.
\end{proof}

Now we present a construction of 3-critical graphs with few edges; see
\cite[p.~43]{FioriniWilson} and \cite[p.~815]{Woodall08}.

\begin{example}
Form $J_k$ by starting with $P^*$ and taking the \hajos\ join
with $P^*$ a total of $k$ times (successively), so that each intermediate graph
has $\Delta=3$.  The resulting graph $J_k$ is 3-critical, has $11k+12$ edges and
$8k+9$ vertices.  Thus, $a(J_k)\to \frac{11}4$ from below as $k\to \infty$.
\label{example1}
\end{example}

The vertex and edge counts follow immediately by induction.  That $J_k$ is
3-critical uses induction and also Lemma~\ref{critical}. $\qed$

%\textbf{Add pictures for Woodall's example.}
\begin{figure}
\begin{center}
\begin{tikzpicture}[scale = 6]
\tikzstyle{VertexStyle} = []
\tikzstyle{EdgeStyle} = []
\tikzstyle{labeledStyle}=[shape = circle, minimum size = 6pt, inner sep = 1.2pt, draw]
\tikzstyle{unlabeledStyle}=[shape = circle, minimum size = 6pt, inner sep =
1.2pt, draw, fill]
\Vertex[style = unlabeledStyle, x = 0.45, y = 0.80, L = \tiny {}]{v0}
\Vertex[style = unlabeledStyle, x = 0.35, y = 0.75, L = \tiny {}]{v1}
\Vertex[style = unlabeledStyle, x = 0.30, y = 0.65, L = \tiny {}]{v2}
\Vertex[style = unlabeledStyle, x = 0.35, y = 0.55, L = \tiny {}]{v3}
\Vertex[style = unlabeledStyle, x = 0.45, y = 0.50, L = \tiny {}]{v4}
\Vertex[style = unlabeledStyle, x = 0.55, y = 0.55, L = \tiny {}]{v5}
\Vertex[style = unlabeledStyle, x = 0.60, y = 0.65, L = \tiny {}]{v6}
\Vertex[style = unlabeledStyle, x = 0.55, y = 0.75, L = \tiny {}]{v7}
\Vertex[style = unlabeledStyle, x = 0.45, y = 0.95, L = \tiny {}]{v8}
\Edge[label = \tiny {}, labelstyle={auto=right, fill=none}](v1)(v2)
\Edge[label = \tiny {}, labelstyle={auto=right, fill=none}](v1)(v0)
\Edge[label = \tiny {}, labelstyle={auto=right, fill=none}](v7)(v0)
\Edge[label = \tiny {}, labelstyle={auto=right, fill=none}](v7)(v6)
\Edge[label = \tiny {}, labelstyle={auto=right, fill=none}](v7)(v3)
\Edge[label = \tiny {}, labelstyle={auto=right, fill=none}](v5)(v6)
\Edge[label = \tiny {}, labelstyle={auto=right, fill=none}](v5)(v4)
\Edge[label = \tiny {}, labelstyle={auto=right, fill=none}](v5)(v1)
\Edge[label = \tiny {}, labelstyle={auto=right, fill=none}](v3)(v4)
\Edge[label = \tiny {}, labelstyle={auto=right, fill=none}](v3)(v2)
%\Edge[bend left, label = \tiny {}, labelstyle={auto=right, fill=none}](v8)(v2)
%\Edge[bend right, label = \tiny {}, labelstyle={auto=right, fill=none}](v8)(v6)
\path [line width=1pt]
(v2) edge [bend left=35] (v8)
(v8) edge [bend left=35] (v6);
\end{tikzpicture}
~~
\begin{tikzpicture}[scale = 6]
\tikzstyle{VertexStyle} = []
\tikzstyle{EdgeStyle} = []
\tikzstyle{labeledStyle}=[shape = circle, minimum size = 6pt, inner sep = 1.2pt, draw]
\tikzstyle{unlabeledStyle}=[shape = circle, minimum size = 6pt, inner sep = 1.2pt, draw, fill]
\Vertex[style = unlabeledStyle, x = 0.85, y = 0.80, L = \tiny {}]{v0}
\Vertex[style = unlabeledStyle, x = 0.75, y = 0.75, L = \tiny {}]{v1}
\Vertex[style = unlabeledStyle, x = 0.70, y = 0.65, L = \tiny {}]{v2}
\Vertex[style = unlabeledStyle, x = 0.75, y = 0.55, L = \tiny {}]{v3}
\Vertex[style = unlabeledStyle, x = 0.85, y = 0.50, L = \tiny {}]{v4}
\Vertex[style = unlabeledStyle, x = 0.95, y = 0.55, L = \tiny {}]{v5}
\Vertex[style = unlabeledStyle, x = 1.00, y = 0.65, L = \tiny {}]{v6}
\Vertex[style = unlabeledStyle, x = 0.95, y = 0.75, L = \tiny {}]{v7}
\Vertex[style = unlabeledStyle, x = 0.45, y = 0.80, L = \tiny {}]{v8}
\Vertex[style = unlabeledStyle, x = 0.35, y = 0.75, L = \tiny {}]{v9}
\Vertex[style = unlabeledStyle, x = 0.30, y = 0.65, L = \tiny {}]{v10}
\Vertex[style = unlabeledStyle, x = 0.35, y = 0.55, L = \tiny {}]{v11}
\Vertex[style = unlabeledStyle, x = 0.45, y = 0.5, L = \tiny {}]{v12}
\Vertex[style = unlabeledStyle, x = 0.55, y = 0.55, L = \tiny {}]{v13}
\Vertex[style = unlabeledStyle, x = 0.60, y = 0.65, L = \tiny {}]{v14}
\Vertex[style = unlabeledStyle, x = 0.55, y = 0.75, L = \tiny {}]{v15}
\Vertex[style = unlabeledStyle, x = 0.65, y = 0.95, L = \tiny {}]{v16}
\Edge[label = \tiny {}, labelstyle={auto=right, fill=none}](v1)(v0)
\Edge[label = \tiny {}, labelstyle={auto=right, fill=none}](v1)(v2)
\Edge[label = \tiny {}, labelstyle={auto=right, fill=none}](v3)(v2)
\Edge[label = \tiny {}, labelstyle={auto=right, fill=none}](v3)(v4)
\Edge[label = \tiny {}, labelstyle={auto=right, fill=none}](v5)(v1)
\Edge[label = \tiny {}, labelstyle={auto=right, fill=none}](v5)(v4)
\Edge[label = \tiny {}, labelstyle={auto=right, fill=none}](v5)(v6)
\Edge[label = \tiny {}, labelstyle={auto=right, fill=none}](v7)(v0)
\Edge[label = \tiny {}, labelstyle={auto=right, fill=none}](v7)(v3)
\Edge[label = \tiny {}, labelstyle={auto=right, fill=none}](v7)(v6)
\Edge[label = \tiny {}, labelstyle={auto=right, fill=none}](v9)(v8)
\Edge[label = \tiny {}, labelstyle={auto=right, fill=none}](v9)(v10)
\Edge[label = \tiny {}, labelstyle={auto=right, fill=none}](v11)(v10)
\Edge[label = \tiny {}, labelstyle={auto=right, fill=none}](v11)(v12)
\Edge[label = \tiny {}, labelstyle={auto=right, fill=none}](v13)(v9)
\Edge[label = \tiny {}, labelstyle={auto=right, fill=none}](v13)(v12)
\Edge[label = \tiny {}, labelstyle={auto=right, fill=none}](v13)(v14)
\Edge[label = \tiny {}, labelstyle={auto=right, fill=none}](v15)(v8)
\Edge[label = \tiny {}, labelstyle={auto=right, fill=none}](v15)(v11)
\Edge[label = \tiny {}, labelstyle={auto=right, fill=none}](v15)(v14)
%\Edge[label = \tiny {}, labelstyle={auto=right, fill=none}](v16)(v10)
%\Edge[label = \tiny {}, labelstyle={auto=right, fill=none}](v16)(v6)
\Edge[label = \tiny {}, labelstyle={auto=right, fill=none}](v2)(v14)
\path 
(v6) edge [line width=1pt, bend right=45] (v16)
(v10) edge [line width=1pt, bend left=45] (v16);
\end{tikzpicture}
~~
\begin{tikzpicture}[scale = 6]
\tikzstyle{VertexStyle} = []
\tikzstyle{EdgeStyle} = []
\tikzstyle{labeledStyle}=[shape = circle, minimum size = 6pt, inner sep = 1.2pt, draw]
\tikzstyle{unlabeledStyle}=[shape = circle, minimum size = 6pt, inner sep = 1.2pt, draw, fill]
\Vertex[style = unlabeledStyle, x = 1, y = 0.80, L = \tiny {}]{v0}
\Vertex[style = unlabeledStyle, x = 0.90, y = 0.75, L = \tiny {}]{v1}
\Vertex[style = unlabeledStyle, x = 0.85, y = 0.65, L = \tiny {}]{v2}
\Vertex[style = unlabeledStyle, x = 0.90, y = 0.55, L = \tiny {}]{v3}
\Vertex[style = unlabeledStyle, x = 1.00, y = 0.50, L = \tiny {}]{v4}
\Vertex[style = unlabeledStyle, x = 1.10, y = 0.55, L = \tiny {}]{v5}
\Vertex[style = unlabeledStyle, x = 1.15, y = 0.65, L = \tiny {}]{v6}
\Vertex[style = unlabeledStyle, x = 1.10, y = 0.75, L = \tiny {}]{v7}
\Vertex[style = unlabeledStyle, x = 0.20, y = 0.80, L = \tiny {}]{v8}
\Vertex[style = unlabeledStyle, x = 0.10, y = 0.75, L = \tiny {}]{v9}
\Vertex[style = unlabeledStyle, x = 0.05, y = 0.65, L = \tiny {}]{v10}
\Vertex[style = unlabeledStyle, x = 0.10, y = 0.55, L = \tiny {}]{v11}
\Vertex[style = unlabeledStyle, x = 0.20, y = 0.50, L = \tiny {}]{v12}
\Vertex[style = unlabeledStyle, x = 0.30, y = 0.55, L = \tiny {}]{v13}
\Vertex[style = unlabeledStyle, x = 0.35, y = 0.65, L = \tiny {}]{v14}
\Vertex[style = unlabeledStyle, x = 0.30, y = 0.75, L = \tiny {}]{v15}
\Vertex[style = unlabeledStyle, x = 0.60, y = 0.95, L = \tiny {}]{v16}
\Vertex[style = unlabeledStyle, x = 0.60, y = 0.80, L = \tiny {}]{v17}
\Vertex[style = unlabeledStyle, x = 0.50, y = 0.75, L = \tiny {}]{v18}
\Vertex[style = unlabeledStyle, x = 0.45, y = 0.65, L = \tiny {}]{v19}
\Vertex[style = unlabeledStyle, x = 0.50, y = 0.55, L = \tiny {}]{v20}
\Vertex[style = unlabeledStyle, x = 0.60, y = 0.5, L = \tiny {}]{v21}
\Vertex[style = unlabeledStyle, x = 0.70, y = 0.55, L = \tiny {}]{v22}
\Vertex[style = unlabeledStyle, x = 0.75, y = 0.65, L = \tiny {}]{v23}
\Vertex[style = unlabeledStyle, x = 0.70, y = 0.75, L = \tiny {}]{v24}
\Edge[label = \tiny {}, labelstyle={auto=right, fill=none}](v1)(v0)
\Edge[label = \tiny {}, labelstyle={auto=right, fill=none}](v1)(v2)
\Edge[label = \tiny {}, labelstyle={auto=right, fill=none}](v3)(v2)
\Edge[label = \tiny {}, labelstyle={auto=right, fill=none}](v3)(v4)
\Edge[label = \tiny {}, labelstyle={auto=right, fill=none}](v5)(v1)
\Edge[label = \tiny {}, labelstyle={auto=right, fill=none}](v5)(v4)
\Edge[label = \tiny {}, labelstyle={auto=right, fill=none}](v5)(v6)
\Edge[label = \tiny {}, labelstyle={auto=right, fill=none}](v7)(v0)
\Edge[label = \tiny {}, labelstyle={auto=right, fill=none}](v7)(v3)
\Edge[label = \tiny {}, labelstyle={auto=right, fill=none}](v7)(v6)
\Edge[label = \tiny {}, labelstyle={auto=right, fill=none}](v9)(v8)
\Edge[label = \tiny {}, labelstyle={auto=right, fill=none}](v9)(v10)
\Edge[label = \tiny {}, labelstyle={auto=right, fill=none}](v11)(v10)
\Edge[label = \tiny {}, labelstyle={auto=right, fill=none}](v11)(v12)
\Edge[label = \tiny {}, labelstyle={auto=right, fill=none}](v13)(v9)
\Edge[label = \tiny {}, labelstyle={auto=right, fill=none}](v13)(v12)
\Edge[label = \tiny {}, labelstyle={auto=right, fill=none}](v13)(v14)
\Edge[label = \tiny {}, labelstyle={auto=right, fill=none}](v15)(v8)
\Edge[label = \tiny {}, labelstyle={auto=right, fill=none}](v15)(v11)
\Edge[label = \tiny {}, labelstyle={auto=right, fill=none}](v15)(v14)
%\Edge[label = \tiny {}, labelstyle={auto=right, fill=none}](v16)(v6)
%\Edge[label = \tiny {}, labelstyle={auto=right, fill=none}](v16)(v10)
\Edge[label = \tiny {}, labelstyle={auto=right, fill=none}](v18)(v17)
\Edge[label = \tiny {}, labelstyle={auto=right, fill=none}](v18)(v19)
\Edge[label = \tiny {}, labelstyle={auto=right, fill=none}](v20)(v19)
\Edge[label = \tiny {}, labelstyle={auto=right, fill=none}](v20)(v21)
\Edge[label = \tiny {}, labelstyle={auto=right, fill=none}](v22)(v18)
\Edge[label = \tiny {}, labelstyle={auto=right, fill=none}](v22)(v21)
\Edge[label = \tiny {}, labelstyle={auto=right, fill=none}](v22)(v23)
\Edge[label = \tiny {}, labelstyle={auto=right, fill=none}](v24)(v17)
\Edge[label = \tiny {}, labelstyle={auto=right, fill=none}](v24)(v20)
\Edge[label = \tiny {}, labelstyle={auto=right, fill=none}](v24)(v23)
\Edge[label = \tiny {}, labelstyle={auto=right, fill=none}](v19)(v14)
\Edge[label = \tiny {}, labelstyle={auto=right, fill=none}](v23)(v2)
\path [line width=1pt]
(v6) edge [bend right=55] (v16)
(v16) edge [bend right=55] (v10);
\end{tikzpicture}
\caption{%$P^*$ and Woodall's first two examples: $J_1$, and $J_2$; 
The first three examples in an infinite family of 3-critical graphs
with $2|E(G)|<(2+\frac34)|V(G)|$.
\label{woodall-examples}}
\end{center}
\end{figure}
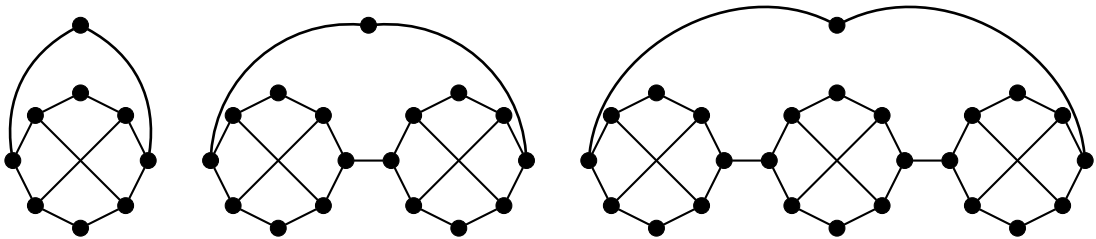
\bigskip

Our main result is the following.
\begin{thm}
\label{thm:main1}
Every 3-critical graph $G$ other than $P^*$ has average
degree $a(G)\ge \frac{46}{17}\approx 2.706$.
\end{thm}

By using a computer, we are able to improve our edge bound further.
\begin{thm}
Let $G$ be a 3-critical graph.  If $G$ is neither $P^*$ nor the
\hajos\ join of two copies of $P^*$, then $G$ has average degree $a(G)\ge
\frac{84}{31}\approx 2.710$.
\label{thm:main2}
\end{thm}
\noindent
However, a
human-readable proof is too long to include here (roughly 100 pages).  We
discuss this work a bit more in Section~\ref{computer}, as well as give
a web link where that proof is available.  
%The following conjecture is probably
%too good to be true, does it fail already for $k=2$?

%\begin{conj}
%For all $k \ge 1$, if $G$ is a 3-critical graph that is not the \hajos\ join of $k$ or fewer copies of $P^*$, then \[a(G)\ge \frac{22k+24}{8k+9}.\]
%\end{conj}

\section{Poor subgraphs, discharging, proofs of the theorems}
\label{sec:discharging}
In this section, we introduce some basic tools for proving edge-coloring
results.  We then prove our main result, subject to some reducibility lemmas
which we prove afterward.
%in the following sections.

Given a (partial) edge-coloring of a graph $G$, an \emph{$(x,y)$-Kempe chain}
(or simply \emph{$(x,y)$-chain})
is a component of the subgraph induced by the edges colored $x$ and $y$.  Note
that each Kempe chain is either a path or an even cycle.  
If vertices $v_1$ and $v_2$ lie in the same $(x,y)$-Kempe chain, then $v_1$ and
$v_2$ are \emph{$x,y$-linked}.
Given a proper coloring of (some subgraph of) a graph $G$, if we interchange
the colors on some $(x,y)$-chain, then the resulting coloring is again proper.
 This interchange is an \emph{$(x,y)$-Kempe swap} (or simply
\emph{$(x,y)$-swap}) and plays a central role in most proofs of forbidden
subgraphs in critical graphs.

Suppose that $d(v_1)=2$, $d(v_2)=3$, and $v_1v_2\in E(G)$.  Suppose also that we
3-color $G-v_1v_2$ with colors $x$, $y$, and $z$.  If we cannot extend this
coloring to $G$, then (by symmetry) we may assume that $v_1$ sees $x$ and that
$v_2$ sees $y$ and $z$.  Furthermore, $v_1$ and $v_2$ must be $x,y$-linked;
otherwise we perform an $(x,y)$-swap at $v_1$ and afterwards color $v_1v_2$
with $x$.  Similarly, $v_1$ and $v_2$ must be $x,z$-linked.

The quintessential tool for forbidding a subgraph in a critical graph is Vizing's
Adjacency Lemma, which he proved using Kempe chains and a similar structure
for recoloring, known as Vizing fans.
The proof is available in Fiorini \& Wilson~\cite[p. 72--74]{FioriniW77} and 
Stiebitz et al.~\cite[p. 19ff.]{StiebitzSTF12}. 

\begin{VAL}
Let $G$ be a $\Delta$-critical graph.  If $u,v\in V(G)$ and $uv\in E(G)$, then
the number of $\Delta$-neighbors of $u$ different from $v$ is at least $\Delta-d(v)+1$.
\end{VAL}
\noindent
%When $\Delta=3$, we conclude that every 2-vertex has two 3-neighbors, and  
%every 3-vertex has at most one 2-neighbor.

%Two of our proofs that certain subgraphs are forbidden from $G$ are a bit
%lengthy.  To improve readability, we simply state the results when we need
%them in Section~\ref{discharging} (as Claims~\ref{clm2} and \ref{clm5}), 
%and defer the proofs to Section~\ref{fixer-reducibility}. 

We recall the following basic facts, which are well known.

\begin{lem}
For every 3-critical graph $G$, the following hold:
\begin{enumerate}
\item[(a)] $G$ is 2-connected; and
\item[(b)] $G$ has no adjacent 2-vertices, and $G$ has no 3-vertex with two or more 2-neighbors.
% G has no 3-cycle.
\label{lem:basic}
\end{enumerate}
\end{lem}

\begin{proof}
Note that (b) follows immediately from Vizing's Adjacency Lemma.
Hence, it suffices to prove (a).  If $G$ is disconnected, then we can
3-color each component by criticality, a
contradiction.  Similarly, suppose that $G$ has a cut-edge $v_1v_2$ and let
$G_1$ and $G_2$ be the components of $G-v_1v_2$.  By criticality, we can 3-color
both $G_1$ and $G_2$.  Further, we can permute the colors on $G_2$ so that $v_1$
and $v_2$ miss a common color, say $x$.  Now we color $v_1v_2$ with $x$ to get a
3-coloring of $G$, a contradiction.  Thus $G$ is 2-edge-connected. For a
subcubic graph this is the same as 2-connected.
%
%If $G$ has adjacent 2-vertices $v_1$ and $v_2$, then 3-color $G-v_1v_2$.  Now at
%most two colors are forbidden on $v_1v_2$, so we can extend the coloring.  
%Recall that Vizing's Adjacency Lemma guarantees that each 3-vertex has at least
%two 3-neighbors, so at most one 2-neighbor.
%
%Now suppose %instead 
%that $G$ has a 3-cycle $v_1v_2v_3$.  First, suppose that $v_1v_2$ lies in a
%second 3-cycle $v_1v_2v_4$.  If $v_3v_4$ is also in $G$, then $G\cong K_4$, so
%$\chi'(G)=3$.  So suppose not.  Let $v_5$ be a neighbor of $v_3$ other than $v_1$ and
%$v_2$ (or a neighbor of $v_4$ other than $v_1$ and $v_2$).  To form $G'$ from $G$,
%contract $\{v_1,v_2,v_3,v_4,v_5\}$ to a single vertex.  Since $G$ has no
%cut-edges, each of vertices $v_1,v_2,v_3,v_4$ has degree 3; thus, the
%average degree of $G'$ is less than that of $G$.
%Now by minimality, $\chi'(G')=3$, and we can extend this coloring of $G'$ to $G$. 
%So we may assume that no edge of $v_1v_2v_3$ lies on a second triangle.  To form
%$G'$ from $G$, contract the three edges of triangle $v_1v_2v_3$.  Again, by
%minimality, $\chi'(G')=3$, and we can extend the coloring to $G$. 
\end{proof}
\smallskip

Let $G$ be a 3-critical graph.  Lemma~\ref{lem:basic}(a) implies that every vertex of
$G$ has degree 2 or 3.  We call a 3-vertex \emph{poor} if it has a 2-neighbor
and \emph{rich} otherwise.  The \emph{poor subgraph} of $G$ is the subgraph $H$
induced by the poor vertices; its components, denoted by $H_i$, are the
\emph{poor fragments} of $G$.  Every poor vertex has exactly one 2-neighbor, so
at most two poor neighbors.  Thus, every poor fragment has maximum degree at
most 2, so is a path or a cycle.

For a rich vertex $w$, let $p(w)$ denote the sum of the orders of all the poor
fragments in which $w$ has neighbors, accounting for multiplicity (so if $w$ is
adjacent to both end-vertices of a poor fragment $H_i$, then $|V(H_i)|$ counts
twice towards $p(w)$).  Let $p(G)=\max\{p(w)$ : $w$ is a rich vertex of $G$\}. 
The next result is the foundation of our proof.

\begin{lem}
Let $G$ be a 3-critical graph such that every poor fragment is a path (not a
cycle) and let $p=p(G)$.  Now $G$ has average degree $a(G)\ge
\frac{8p+12}{3p+4}$.
\label{keylemma}
\end{lem}
\begin{proof}
We use the discharging method with initial charge $\ch(v)=d(v)$ for each vertex
$v$ and the following three discharging rules.
\begin{enumerate}
\item[(R1)] Each 2-vertex takes charge $\frac{p+2}{3p+4}$ from each neighbor.
\item[(R2)] Each rich vertex gives charge $\frac{t}{3p+4}$ to each neighbor in a
poor fragment of order $t$.
\item[(R3)] All vertices within a poor fragment share charge equally.
\end{enumerate}

Now we show that each vertex $v$ finishes with final charge $\ch^*(v)\ge 
\frac{8p+12}{3p+4}$, which proves the lemma.  If $v$ is a 2-vertex, then
$\ch^*(v)=2+2(\frac{p+2}{3p+4}) = \frac{8p+12}{3p+4}$.  If $v$ is a rich vertex,
then $\ch^*(v)\ge 3-\frac{p}{3p+4} = \frac{8p+12}{3p+4}$.  Now suppose that $v$
is a poor vertex in a poor fragment $H_i$, and let $t=|V(H_i)|$.  Since $H_i$ is a
path, each of its two end-vertices must be adjacent to a rich vertex.  Thus,
these end-vertices receive in total $2(\frac{t}{3p+4})$.  So by (R3), each vertex
of $H_i$ receives $\frac2{3p+4}$.  Thus $\ch^*(v) = 3 - \frac{p+2}{3p+4}
+\frac2{3p+4} = \frac{8p+12}{3p+4}$, as desired.
\end{proof}

If we let $f(p)=\frac{8p+12}{3p+4}$, then $f(10)=\frac{46}{17}\approx 2.706$ and
$f(9)=\frac{84}{31}\approx 2.710$.  So to prove Theorems 1 and 2 it suffices to
prove that (for appropriate graphs $G$) always $p(G)\le 10$ and $p(G)\le 9$,
respectively.  Subject to some lemmas that we prove below and in the following
sections, we can now prove the two main theorems. For convenience, we restate them.

\begin{mainthm1}
Every 3-critical graph $G$ other than $P^*$ has average
degree $a(G)\ge \frac{46}{17}\approx 2.706$.
\end{mainthm1}

\begin{proof}
Suppose the theorem is false, and let $G$ be a minimal counterexample.
We will use Lemma~\ref{keylemma}.  So it suffices to show that (i) every poor
fragment is a path and (ii) $p(G)\le 10$.
Since $\frac{46}{17}<\frac{30}{11}$, Lemma~\ref{lem:summary} shows that every
poor fragment is a path with at most 5 vertices.  Thus, it suffice to show that
$p(G)\le 10$.  

Consider an arbitrary rich vertex $w$.  If every poor fragment adjacent to $w$
has order at most 3, then $p(w)\le 3(3)=9$.  So suppose $w$ has some adjacent
poor fragment $H_1$ with order at least 4; recall from above that $H_1$ has
order at most 5.  By the last sentence of Lemma~\ref{lem:summary}, 
each poor neighbor of $w$ is in a distinct poor fragment.
By Lemma~\ref{two-jellyfish}, each adjacent poor fragment other than $H_1$ has
order at most 3.  Thus $p(w)\le 5+3+3=11$.  Further, equality
holds only if $w$ is adjacent to poor fragments of orders 5, 3, and 3.  By
Lemma~\ref{lem:new3}, this is impossible.  Thus, $p(w)\le 10$, as desired.
\end{proof}

\begin{mainthm2}
Let $G$ be a 3-critical graph.  If $G$ is neither $P^*$ nor the
\hajos\ join of two copies of $P^*$, then $G$ has average degree $a(G)\ge
\frac{84}{31}\approx 2.710$.
\end{mainthm2}

\begin{proof}
The proof is nearly the same as that of Theorem~\ref{thm:main1}.  Now, in
addition, we use a computer to show that if $G$ is not the \hajos\ join of two
copies of $P^*$, then no rich vertex is adjacent to two poor
fragments, each of order at least 3.  Thus, for each rich vertex $w$, the
largest order of an adjacent poor fragment is at most 5; if this order is at
least 4, then each of the remaining adjacent poor fragments has
order at most 2.  Hence, $p(w)\le 5+2+2=9$ or $p(w)\le 3+3+3=9$;
so, $p(G)\le 9$. %as desired.
\end{proof}

We summarize the remaining results of this section in the following lemma. For
the proofs
of Theorems~\ref{thm:main1} and~\ref{thm:main2}, we need to know that this result
holds when $\alpha=\frac{46}{17}\approx 2.706$ and $\alpha=\frac{84}{31}\approx
2.710$ respectively, which it does, since $\frac83\approx
2.667<\frac{46}{17}<\frac{84}{31}<2.75=\frac{11}4$.  If $H_i$ is a poor fragment
of $G$, then let $H_i^+$ denote the subgraph of $G$ induced by the vertices of
$H_i$ and the neighboring 2-vertices.

\begin{lem} 
\label{lem:summary}
Let $\frac83\le \alpha \le \frac{11}4$, let $G$ be a 3-critical
graph such that $a(G)<\alpha$ and $G$ is neither $P^*$ nor the (successive)
\hajos\ join of multiple copies of $P^*$ (as in
Example~\ref{woodall-examples}); among all 3-critical graphs with these
properties, choose $G$ to have the fewest vertices.  Assume that either
$G$ is triangle-free or $\alpha\le \frac{30}{11}=2.\overline{72}$.  Now every
poor fragment $H_i$ of $G$ is a path on at most 5 vertices, and $H_i^+$ has one
of the forms shown in Figure~\ref{fig:H_i^+}.  In addition, no rich vertex is
adjacent to both end-vertices of $H_i$.
\end{lem}

We prove Lemma~\ref{lem:summary} in a sequence of six claims.

\begin{clm}
Suppose $0<n<|V(G)|$ and $J$ is a connected subcubic graph with $|V(G)|-n$
vertices and $|E(G)|-m$ edges.  Suppose that one of the following holds: (i)
$\frac{m}n\ge \frac{\alpha}2$ and $J$ is not a \hajos\ join of copies of $P^*$
or (ii) $\frac{12+m}{9+n}\ge \frac{\alpha}2$.  Now $J$ is not 3-critical.
\end{clm}

\begin{proof}
%\noindent\textbf{Proof.} 
Suppose first that (i) holds.  Since $\frac{2m}n\ge
\alpha > a(G)=\frac{2|E(G)|}{|V(G)|}$, it follows that
$a(J)=\frac{2|E(G)|-2m}{|V(G)|-n}<\frac{2|E(G)|}{|V(G)|}=a(G)<\alpha$.  Since
$J$ is not a \hajos\ join of copies of $P^*$ by (i), it follows from the
minimality of $G$ that $J$ is not 3-critical.

We now prove that (ii) implies (i).  Since $\frac{12}9=\frac43\le
\frac{\alpha}2$, the hypothesis implies that $\frac{m}n\ge \frac{\alpha}2$.
Suppose now that $J$ is a \hajos\ join of $k$ copies of $P^*$, so that
$|V(J)|=8k+1$ and $|E(J)|=11k+1$.  Recall that $\frac{2(12+m)}{9+n}\ge \alpha$ by
(ii), and $\frac{11}4\ge \alpha$.  Thus, $a(G)= \frac{2(|E(J)|+m)}{|V(J)|+n} =
\frac{2(11k+1+m)}{8k+1+n} = \frac{2(11(k-1)+(12+m))}{8(k-1)+(9+n)}\ge \alpha$. 
This contradiction shows that $J$ is not a \hajos\ join of copies of $P^*$, and
so (i) holds. 
\end{proof}
%$\qed$

We now prove some structural properties of $G$.

\begin{clm}
$G$ does not have a subgraph $F$ containing exactly two vertices, $v_1$ and
$v_2$, with neighbors outside $F$ such that $F+v_1v_2\cong P^*$.
\label{clm:noPete}
\end{clm}

\begin{proof}
%\noindent\textbf{Proof.}
Suppose it does.  Since every edge of $P^*$ has at least one end-vertex of
degree 3, we may assume that $v_2$ has degree 3\ in $P^*$ and degree 2\ in $F$.
Since $G$ is subcubic and 2-connected by Lemma~\ref{lem:basic}, there is exactly one
edge $v_2v_4$ in $G$ with $v_4\notin F$.  It is now easy to see that $G$ is the
\hajos\ join of $P^*$ and a connected subcubic graph $J$ smaller than $G$.  Note
that $J$ is not a \hajos\ join of copies of $P^*$, since otherwise $G$ would be as
well, which it is not.

By Corollary~\ref{hajos-cor}, $J$ is 3-critical.  Now since $|V(J)|=|V(G)|-8$ and
$|E(J)|=|E(G)|-11$ and $\frac{11}8\ge \frac{\alpha}2$, it follows from
Claim~1(i) that $J$ is not 3-critical. This is the required contradiction.
\end{proof}
%$\qed$

\begin{clm}
If $\alpha\le \frac{30}{11}=2.\overline{72}$, then $G$ is triangle-free.
\label{clm1}
\end{clm}
\begin{proof}
Suppose that $G$ has a 3-cycle $v_1v_2v_3$.  Suppose first that no edge of
$v_1v_2v_3$ lies in another triangle.  Form $G'$ from $G$ by contracting the
three edges of triangle $v_1v_2v_3$.  Since $|V(G')|=|V(G)|-2$ and
$|E(G')|=|E(G)|-3$ and $\frac{12+3}{9+2}\ge \frac{\alpha}{2}$, Claim~1(ii)
implies that $G'$ is not 3-critical.  But it is easy to see that if any two
subcubic graphs $G$ and $G'$ are related in this way (with a vertex of $G'$
corresponding to a triangle in $G$), then $G'$ is 3-critical if and only if $G$
is, so we have a contradiction.

Thus we may assume that some edge of $v_1v_2v_3$ lies in another triangle, say
$v_1v_2v_4$.  Note that $v_3$ and $v_4$ are not adjacent and have no common
neighbor of degree 2, since this would imply $(|V(G)|,|E(G)|)$ is $(4,6)$ or
$(5,7)$ and $a(G)\ge \frac{14}5=2.8>\alpha$.  Since $G$ is 2-connected, $v_3$
and $v_4$ have distinct neighbors $v_5$ and $v_6$, respectively.  

A \emph{ladder} is any graph that is formed from the disjoint union of paths
$w_1\cdots w_\ell$ and $z_1\cdots z_\ell$ by adding all edges $z_iw_i$ for $2
\le i \le \ell-1$ as well as possibly adding $w_1z_1$ or $w_\ell z_\ell$ (or
both); such edges $z_iw_i$ are \emph{rungs}.  Let $L$ be a maximal induced
subgraph of $G$ containing $v_1, \ldots, v_6$ such that $L - \set{v_1, v_2}$ is
a ladder.  Let $x_1, x_2$ be the vertices on the opposite end of the ladder
from $v_3, v_4$.

First, suppose $x_1$ and $x_2$ are nonadjacent.  Form $L'$ from $L$ by
identifying $x_1$ and $x_2$ to create a new vertex $x'$.  Note that $G$ is a
\hajos\ join of $L'$ and another graph $J$.  Suppose the ladder $L - \set{v_1,
v_2}$ has $t$ rungs.  Then $|V(J)| = |V(G)| - 2(t+2)$ and $|E(J)| = |E(G)| -
3(t+2)$.  Since $\frac{12 + 3(t+2)}{9 + 2(t+2)} \ge \frac{\alpha}{2}$ for all
$t \ge 0$, Claim~1(ii) shows that $J$ is not 3-critical.  Since $G$ is
3-critical, Corollary~\ref{hajos-cor} shows that $L'$ is not 3-critical.  It is
easy to see that $L'$ is 4-chromatic, since it is 3-regular, except for the
single 2-vertex $x'$.  So, $L'$ has a $3$-critical proper subgraph $Q$.  
Now $x' \in V(Q)$ since $L'-x'$ is a proper subgraph of $G$.  
By Lemma \ref{lem:basic}(a,b), $Q$ contains every edge in $G[v_1,v_2,v_3,v_4]$.  
By Lemma \ref{lem:basic}(a), $Q$ contains all edges of the ladder that are not rungs.  
If $Q$ is missing a rung of the ladder, then $Q$ contains an edge-cut
$\set{e_1, e_2}$ with each $e_i$ incident to a
2-vertex, which is impossible since $Q$ is $3$-critical.  Hence $Q = L'$, a
contradiction.

So, $x_1$ and $x_2$ must be adjacent.  Now $x_1$ and $x_2$ cannot have distinct
neighbors outside $L$, by the maximality of $L$.  But $x_1$ and $x_2$ have
no common neighbor either since then $G$ would have either a cut-edge or a
triangle with no edge in two triangles (contradicting minimality, by the first
paragraph of this proof of Claim~3).  To avoid a cut-edge, the only
remaining possibility is that $x_1$ and $x_2$ are both 2-vertices.  But now
$G$ is easily $3$-colored, a contradiction.
\end{proof}
%$\qed$

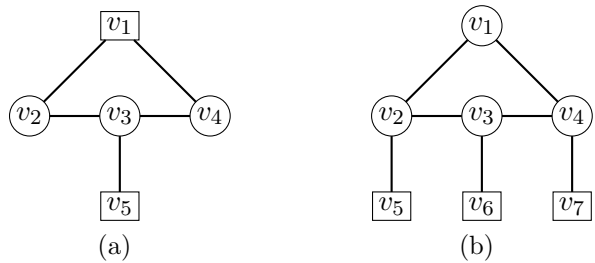
\begin{figure}[hbt]
\begin{center}

~~~
\subfloat[]{
\makebox[.25\textwidth]{
\begin{tikzpicture}[scale = 12]
\tikzstyle{VertexStyle} = []
\tikzstyle{EdgeStyle} = []
\tikzstyle{unlabeledStyle}=[shape = circle, minimum size = 6pt, inner sep = 1.2pt, draw, fill]
\Vertex[style = labeledTwo, x = 0.60, y = 0.80, L = \small {$v_1$}]{v0}
\Vertex[style = labeledThree, x = 0.50, y = 0.70, L = \small {$v_2$}]{v1}
\Vertex[style = labeledThree, x = 0.60, y = 0.70, L = \small {$v_3$}]{v2}
\Vertex[style = labeledThree, x = 0.70, y = 0.70, L = \small {$v_4$}]{v3}
\Vertex[style = labeledTwo, x = 0.60, y = 0.60, L = \small {$v_5$}]{v4}
\Edge[](v3)(v2)
\Edge[](v3)(v0)
\Edge[](v1)(v0)
\Edge[](v1)(v2)
\Edge[](v2)(v4)
\end{tikzpicture}
}}
~~~
\subfloat[]{
\makebox[.25\textwidth]{
\begin{tikzpicture}[scale = 12]
\tikzstyle{VertexStyle} = []
\tikzstyle{EdgeStyle} = []
\tikzstyle{unlabeledStyle}=[shape = circle, minimum size = 6pt, inner sep =
1.2pt, draw, fill]
\Vertex[style = labeledThree, x = 0.60, y = 0.80, L = \small {$v_1$}]{v0}
\Vertex[style = labeledThree, x = 0.50, y = 0.70, L = \small {$v_2$}]{v1}
\Vertex[style = labeledThree, x = 0.60, y = 0.70, L = \small {$v_3$}]{v2}
\Vertex[style = labeledThree, x = 0.70, y = 0.70, L = \small {$v_4$}]{v3}
\Vertex[style = labeledTwo, x = 0.70, y = 0.60, L = \small {$v_7$}]{v4}
\Vertex[style = labeledTwo, x = 0.60, y = 0.60, L = \small {$v_6$}]{v5}
\Vertex[style = labeledTwo, x = 0.50, y = 0.60, L = \small {$v_5$}]{v6}
\Edge[](v3)(v4)
\Edge[](v3)(v2)
\Edge[](v3)(v0)
\Edge[](v1)(v6)
\Edge[](v1)(v0)
\Edge[](v1)(v2)
\Edge[](v2)(v5)
\end{tikzpicture}
}}
\end{center}
\caption{Two subgraphs forbidden from a 3-critical graph $G$. 
Vertices drawn as rectangles have degree 2 in $G$ and those drawn as circles
have degree 3 in $G$.}
%\label{tree1-pic}
\label{umbrella-pic}
\label{jellyfish-pic}
\end{figure}

\begin{clm}
Neither of the configurations in Figures~\ref{umbrella-pic}(a) and
\ref{jellyfish-pic}(b) is a subgraph of $G$.
%~\ref{tree1-pic}(a), 
\label{clm2}
\end{clm}
\begin{proof}
We begin with Figure~\ref{umbrella-pic}(a).  
Since $G$ is 3-critical, $\chi'(G)=4$, but there is a 3-coloring $\varphi$ of
$G-v_3v_5$ using colors $x$, $y$, and $z$.  Since $v_3v_5$ cannot be colored,
we may assume that $v_5$ sees color $x$, $\varphi(v_2v_3)=y$, and
$\varphi(v_3v_4)=z$.  First suppose that $v_1$ sees $x$.  By symmetry (in the
graph and also between colors $y$ and $z$), assume $\varphi(v_1v_2)=x$ and
$\varphi(v_1v_4)=y$. Recolor $v_2v_3$ with $x$, recolor $v_1v_2$ and $v_3v_4$ with
$y$, and recolor $v_1v_4$ with $z$.  Now we can color $v_3v_5$ with $z$.  So
assume instead that $v_1$ misses $x$; thus $\varphi(v_1v_2)=z$ and
$\varphi(v_1v_4)=y$.  Now do an $(x,y)$-swap at $v_5$.  Edge $v_3v_5$
will be colorable with $x$ unless the $(x,y)$-path starting at $v_5$ ends
at $v_3$; so assume that it does.  Now $v_5$ sees $y$ and $v_1$ sees $y$, so we
can extend the coloring to $G$ as above (with $y$ in the role of $x$).

Now consider Figure~\ref{jellyfish-pic}(b).  As above, we use colors $x$, $y$, $z$
to 3-color $G-v_3v_6$; call the coloring $\varphi$.  
As before, we assume that $v_6$ sees color $x$, $\varphi(v_2v_3)=y$, and
$\varphi(v_3v_4)=z$.  We assume by symmetry that $\varphi(v_1v_2)\ne x$, so
that $\varphi(v_1v_2)=z$ and $\varphi(v_2v_5)=x$.
%WLOG, $v_6$ sees $x$. 
%Since $\varphi(v_1v_2)\ne \varphi(v_1v_4)$, by symmetry, assume $\varphi(v_2v_5)=x$.
%Again, WLOG, $\varphi(v_2v_3)=y$, $\varphi(v_3v_4)=z$, and $\varphi(v_1v_2)=z$.  
We may
assume that $v_3$ and $v_6$ are $x,y$-linked.  Thus, $v_5$ sees $y$.  If
$\varphi(v_1v_4)=y$, then do a $(y,z)$-swap at $v_3$ (the entire component is
just the 4-cycle $v_1v_2v_3v_4$).  Now $v_3$ and $v_6$
are no longer $x,z$-linked, so do an $(x,z)$-swap at $v_3$, and color
$v_3v_6$ with $z$.  Thus, we assume that $\varphi(v_1v_4)=x$.  Now again, do an
$(x,z)$-swap at $v_3$, then color $v_3v_6$ with $z$.
\end{proof}

\begin{clm}
Every poor fragment $H_i$ of $G$ is a path on at most 5 vertices, and $H_i^+$
has one of the forms in Figure~\ref{fig:H_i^+}.  
%\label{H-structure}
\label{clm3}
\end{clm}
\begin{proof}
Suppose not.  By construction, $\Delta(H_i)\le 2$; since $G$ has no 3-cycles
(by Claim~\ref{clm1} and the hypothesis of Lemma~\ref{lem:summary}), assume
that some poor fragment $H_1$ induces a path or
cycle $v_1\cdots v_t$ on four or more vertices.  
Since $G$ has no 3-cycles, no successive 3-vertices on $H_1$ have a common
2-neighbor.  Similarly, since the configuration in Figure~\ref{umbrella-pic}(a)
is forbidden, no vertices at distance two on $H_1$ have a common 2-neighbor.
By Lemma~\ref{lem:fig2a}, no four consecutive vertices on $H_1$ have distinct
2-neighbors.  Thus, each vertex $v_i\in V(H_1)$ (for $i \in \{1,\ldots,t-3\}$)
must share a common 2-neighbor with $v_{i+3}$. This immediately gives that $t
\le 6$, since otherwise $v_4$ must share a common 2-neighbor with both $v_1$
and $v_7$, a contradiction.

If $H_1$ is a 6-cycle, then (since $G$ is subcubic and connected), $G=H_1^+$,
and also $H_1^+\cong P^*$; this contradicts the definition of $G$ in
Lemma~\ref{lem:summary}.  (Recall that $P^*$ can alternatively be drawn with a
convex 6-cycle $C$, where each pair of vertices at distance 3 on $C$ have a commong
2-neighbor.) If $H_1$ is a path of order 6, then $v_1$ and $v_6$
are the only vertices of $H_1^+$ that can have neighbors outside $H_1^+$, so
$H_1^++v_1v_6\cong P^*$; this contradicts Claim~\ref{clm:noPete}.
If $H_1$ is a 5-cycle, then $v_1$ and $v_4$ have a common 2-neighbor; similarly,
$v_2$ and $v_5$ have a common 2-neighbor.  Thus, the edge from $v_3$ to its
2-neighbor is a cut-edge, contradicting Lemma~\ref{lem:basic}(a).  If $H_1$ is a
4-cycle, then $v_1$ and $v_4$ have a common 2-neighbor, but they are also
adjacent.  This 3-cycle contradicts Claim~\ref{clm1}.
Thus, $H_1$ is a path on at most five vertices.  Further, since the
configuration in Figure~\ref{fig:2a} is forbidden by Lemma~\ref{lem:fig2a},
$H_i^+$ has one of the forms in Figure~\ref{fig:H_i^+}.
\end{proof}

\begin{figure}
\centering
\subfloat[]{
\makebox[.08\textwidth]{
\begin{tikzpicture}[scale = 12]
\tikzstyle{VertexStyle} = []
\tikzstyle{EdgeStyle} = []
\tikzstyle{labeledThree}=[shape = circle, minimum size = 8pt, inner sep = 2.2pt, draw]
\tikzstyle{labeledTwo}=[shape = rectangle, minimum size = 8pt, inner sep = 3.2pt, draw]
\tikzstyle{labeledStyle}=[shape = circle, minimum size = 6pt, inner sep = 1.2pt, draw]
\tikzstyle{unlabeledStyle}=[shape = circle, minimum size = 6pt, inner sep = 1.2pt, draw, fill]
\Vertex[style = labeledTwo, x = 0.450, y = 0.750, L = \large {$\>$}]{v0}
\Vertex[style = labeledThree, x = 0.450, y = 0.650, L = \large {$~$}]{v1}
\Edge[label = \tiny {}, labelstyle={auto=right, fill=none}](v1)(v0)
\end{tikzpicture}
}}
\subfloat[]{
\makebox[.17\textwidth]{
\begin{tikzpicture}[scale = 12]
\tikzstyle{VertexStyle} = []
\tikzstyle{EdgeStyle} = []
\tikzstyle{labeledThree}=[shape = circle, minimum size = 8pt, inner sep = 2.2pt, draw]
\tikzstyle{labeledTwo}=[shape = rectangle, minimum size = 8pt, inner sep = 3.2pt, draw]
\tikzstyle{labeledStyle}=[shape = circle, minimum size = 6pt, inner sep = 1.2pt, draw]
\tikzstyle{unlabeledStyle}=[shape = circle, minimum size = 6pt, inner sep = 1.2pt, draw, fill]
\Vertex[style = labeledTwo, x = 0.450, y = 0.750, L = \large {$\>$}]{v0}
\Vertex[style = labeledThree, x = 0.450, y = 0.650, L = \large {$~$}]{v1}
\Vertex[style = labeledThree, x = 0.550, y = 0.650, L = \large {$~$}]{v2}
\Vertex[style = labeledTwo, x = 0.550, y = 0.750, L = \large {$\>$}]{v3}
\Edge[label = \tiny {}, labelstyle={auto=right, fill=none}](v2)(v3)
\Edge[label = \tiny {}, labelstyle={auto=right, fill=none}](v2)(v1)
\Edge[label = \tiny {}, labelstyle={auto=right, fill=none}](v1)(v0)
\end{tikzpicture}
}}
\subfloat[]{
\makebox[.20\textwidth]{
\begin{tikzpicture}[scale = 12]
\tikzstyle{VertexStyle} = []
\tikzstyle{EdgeStyle} = []
\tikzstyle{labeledThree}=[shape = circle, minimum size = 8pt, inner sep = 2.2pt, draw]
\tikzstyle{labeledTwo}=[shape = rectangle, minimum size = 8pt, inner sep = 3.2pt, draw]
\tikzstyle{labeledStyle}=[shape = circle, minimum size = 6pt, inner sep = 1.2pt, draw]
\tikzstyle{unlabeledStyle}=[shape = circle, minimum size = 6pt, inner sep = 1.2pt, draw, fill]
\Vertex[style = labeledTwo, x = 0.450, y = 0.750, L = \large {$\>$}]{v0}
\Vertex[style = labeledThree, x = 0.450, y = 0.650, L = \large {$~$}]{v1}
\Vertex[style = labeledThree, x = 0.550, y = 0.650, L = \large {$~$}]{v2}
\Vertex[style = labeledTwo, x = 0.550, y = 0.750, L = \large {$\>$}]{v3}
\Vertex[style = labeledTwo, x = 0.650, y = 0.750, L = \large {$\>$}]{v4}
\Vertex[style = labeledThree, x = 0.650, y = 0.650, L = \large {$~$}]{v5}
\Edge[label = \tiny {}, labelstyle={auto=right, fill=none}](v5)(v4)
\Edge[label = \tiny {}, labelstyle={auto=right, fill=none}](v5)(v2)
\Edge[label = \tiny {}, labelstyle={auto=right, fill=none}](v2)(v3)
\Edge[label = \tiny {}, labelstyle={auto=right, fill=none}](v2)(v1)
\Edge[label = \tiny {}, labelstyle={auto=right, fill=none}](v1)(v0)
\end{tikzpicture}
}}
\subfloat[]{
\makebox[.23\textwidth]{
\begin{tikzpicture}[scale = 10]
\tikzstyle{VertexStyle} = []
\tikzstyle{EdgeStyle} = []
\tikzstyle{labeledThree}=[shape = circle, minimum size = 8pt, inner sep = 2.2pt, draw]
\tikzstyle{labeledTwo}=[shape = rectangle, minimum size = 8pt, inner sep = 3.2pt, draw]
\tikzstyle{unlabeledStyle}=[shape = circle, minimum size = 6pt, inner sep = 1.2pt, draw, fill]
\Vertex[style = labeledThree, x = 0.30, y = 0.80, L = \Large {$~$}]{v9}
\Vertex[style = labeledThree, x = 0.40, y = 0.80, L = \Large {$~$}]{v10}
\Vertex[style = labeledThree, x = 0.5, y = 0.80, L = \Large {$~$}]{v11}
\Vertex[style = labeledTwo, x = 0.35, y = 0.95, L = \Large {$\>$}]{v12}
\Vertex[style = labeledThree, x = 0.20, y = 0.80, L = \Large {$~$}]{v13}
\Vertex[style = labeledTwo, x = 0.30, y = 0.70, L = \Large {$\>$}]{v14}
\Vertex[style = labeledTwo, x = 0.40, y = 0.70, L = \Large {$\>$}]{v16}
\Edge[label = \small {}, labelstyle={auto=right, fill=none}](v9)(v13)
\Edge[label = \small {}, labelstyle={auto=right, fill=none}](v9)(v14)
\Edge[label = \small {}, labelstyle={auto=right, fill=none}](v10)(v9)
\Edge[label = \small {}, labelstyle={auto=right, fill=none}](v10)(v16)
\Edge[label = \small {}, labelstyle={auto=right, fill=none}](v11)(v10)
\Edge[label = \small {}, labelstyle={auto=right, fill=none}](v11)(v12)
\Edge[label = \small {}, labelstyle={auto=right, fill=none}](v12)(v13)
\end{tikzpicture}
}}
\subfloat[]{
\makebox[.30\textwidth]{
\begin{tikzpicture}[scale = 10]
\tikzstyle{VertexStyle} = []
\tikzstyle{EdgeStyle} = []
\tikzstyle{labeledThree}=[shape = circle, minimum size = 8pt, inner sep = 2.2pt, draw]
\tikzstyle{labeledTwo}=[shape = rectangle, minimum size = 8pt, inner sep = 3.2pt, draw]
\tikzstyle{labeledStyle}=[shape = circle, minimum size = 6pt, inner sep = 1.2pt, draw]
\tikzstyle{unlabeledStyle}=[shape = circle, minimum size = 6pt, inner sep = 1.2pt, draw, fill]
\Vertex[style = labeledThree, x = 0.550, y = 0.700, L = \large {$~$}]{v0}
\Vertex[style = labeledThree, x = 0.450, y = 0.700, L = \large {$~$}]{v1}
\Vertex[style = labeledThree, x = 0.750, y = 0.700, L = \large {$~$}]{v2}
\Vertex[style = labeledTwo, x = 0.600, y = 0.850, L = \large {$~$}]{v3}
\Vertex[style = labeledThree, x = 0.650, y = 0.700, L = \large {$~$}]{v4}
\Vertex[style = labeledTwo, x = 0.650, y = 0.600, L = \small
{$~$}]{v5}
\Vertex[style = labeledThree, x = 0.850, y = 0.700, L = \large {$~$}]{v8}
\Vertex[style = labeledTwo, x = 0.700, y = 0.850, L = \large {$~$}]{v9}
\Edge[label = \small {}, labelstyle={auto=right, fill=none}](v1)(v0)
\Edge[label = \small {}, labelstyle={auto=right, fill=none}](v2)(v3)
\Edge[label = \small {}, labelstyle={auto=left, fill=none}](v2)(v4)
\Edge[label = \small {}, labelstyle={auto=right, fill=none}](v3)(v1)
\Edge[label = \small {}, labelstyle={auto=left, fill=none}](v4)(v0)
\Edge[label = \small {}, labelstyle={auto=right, fill=none}](v5)(v4)
\Edge[label = \small {}, labelstyle={auto=left, fill=none}](v8)(v2)
\Edge[label = \small {}, labelstyle={auto=right, fill=none}](v9)(v0)
\Edge[label = \small {}, labelstyle={auto=left, fill=none}](v9)(v8)
\end{tikzpicture}
}}
\caption{The five possibilities for $H_i^+$.\label{fig:H_i^+}
}
\end{figure}
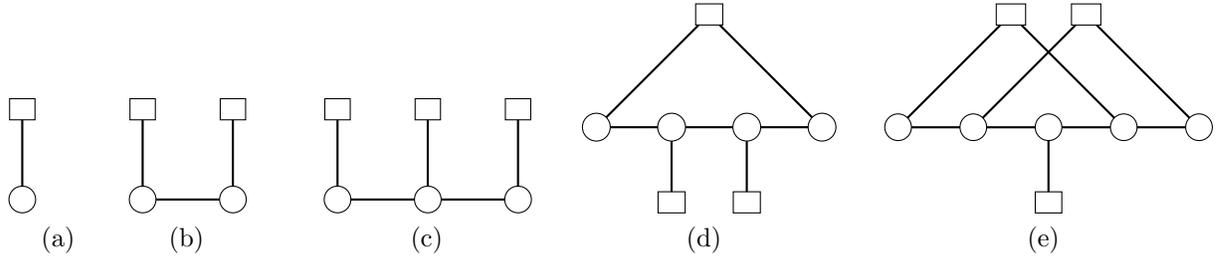

\begin{clm}
No rich 3-vertex has two neighbors in the same poor fragment.
\label{clm4}
\end{clm}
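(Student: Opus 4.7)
The plan is a proof by contradiction, combining the forbidden subgraphs of Claim~\ref{clm2} with a Kempe-swap argument for the sub-cases not directly covered. Suppose some 3-vertex $v$ has distinct neighbors $u_1,u_2$ in the same component $H_1$ of $\Hc$. By Claim~\ref{clm1} $u_1u_2\notin E(G)$, and by Claim~\ref{clm3} $H_1$ is a path of order at most~5, so the $H_1$-distance $d$ between $u_1$ and $u_2$ satisfies $2\le d\le 4$. I first observe that if $v\in\Hc$ then $v$ lies in the same $\Hc$-component as $u_1$, so $v\in H_1$; in that case $v$ is internal to $H_1$ with path-neighbors $u_1,u_2$, forcing $d=2$. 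Hence whenever $d\ge3$ we have $v\notin\Hc$, and then each $u_i$'s unique $\Hc$-neighbor lies in $H_1$, making both $u_1,u_2$ endpoints of $H_1$.

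For $d=2$, let $w$ be the common $H_1$-neighbor of $u_1,u_2$ (so $w=v$ iff $v\in\Hc$), and write $a_1,a_2,c$ for the 2-neighbors of $u_1,u_2,w$ respectively. If $a_1=a_2$, then $\{a_1,u_1,w,u_2,c\}$ realizes the umbrella of Figure~\ref{umbrella-pic}(b). Otherwise $a_1,c,a_2$ are pairwise distinct (since $c=a_i$ would produce a triangle $c,v,u_i$), and either the 4-cycle $vu_1wu_2$ when $v\ne w$ (here $v\notin N(w)=\{u_1,u_2,c\}$, so $v$ is not adjacent to $w$), or, when $v=w$, the 4-cycle $xu_1vu_2$ through any shared 3-vertex neighbor $x\ne v$ of $u_1,u_2$, together with the three distinct 2-neighbors realizes the jellyfish of Figure~\ref{jellyfish-pic}(c). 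The only sub-case left from $d=2$ is $v=w$, $a_1\ne a_2$, and $u_1,u_2$ having no shared 3-vertex neighbor except $v$; this is handled by the Kempe argument below.

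For $d\in\{3,4\}$, the argument from the proof of Claim~\ref{clm3}, applied to the successive $\Hc$-vertices along $H_1$, forces certain distance-$3$ pairs to share a 2-neighbor: $u_1,u_2$ when $d=3$, and both $u_1,w_3$ and $w_1,u_2$ when $d=4$. The local structure becomes rigid enough that, given a 3-edge-coloring $\varphi$ of $G-vu_1$ from criticality, one can choose a color pair whose Kempe chain beginning at $v$ must terminate at one of the forced 2-vertices before it can link up with $u_1$; swapping then matches the missed colors of $v$ and $u_1$ and extends $\varphi$ to $G$, contradicting $\chi'(G)>3$. The same strategy resolves the residual $d=2$ sub-case. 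The main obstacle is this Kempe casework: no forbidden subgraph of Claim~\ref{clm2} directly rules out these configurations, so one must enumerate the finitely many possible partial colorings; the key leverage is that each 2-vertex in the picture has its two-color palette fully determined by $\varphi$, which terminates enough Kempe chains that at least one choice of swap always succeeds.
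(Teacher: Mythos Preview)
There are two genuine gaps in your proposal.

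First, the claim---as the paper intends it and as its proof makes clear---is about 3-vertices $v\notin\Hc$ (rich vertices); the paper's first step is to note that then $u_1,u_2$ must be the two \emph{endvertices} of $H_1$. Your ``residual $d=2$ sub-case'' with $v=w$ is precisely the statement ``$v$ is an interior vertex of $H_1$,'' and this is \emph{not} forbidden: Claim~\ref{clm3} explicitly allows components of $\Hc$ of order up to 5, so such interior vertices exist in $G$. Any Kempe argument purporting to rule this case out would prove something false. (A minor related slip: in the $v\ne w$ case the triangle from $c=a_i$ is $c\,w\,u_i$, not $c\,v\,u_i$.)

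Second, for $d=4$ (equivalently $|H_1|=5$, once $v\notin\Hc$), a Kempe/recoloring argument cannot work. With the forced distance-3 sharings you correctly identify, the nine vertices $\{v,u_1,w_1,w_2,w_3,u_2\}$ together with the three 2-neighbors induce exactly $P^*$ minus one edge; since $P^*$ itself is 3-critical, no local extension of a coloring of $G-vu_1$ can succeed in general. The paper instead observes that $G$ is then a Haj\'os join of $P^*$ with a smaller 3-critical graph $J$, contradicting the choice of $G$ as a minimal counterexample. For $d=3$ ($|H_1|=4$) your approach might be salvageable, but you have not supplied the casework; the paper instead deletes the four vertices of $H_1$, 3-colors the remaining (proper) subgraph, and extends by one of two explicit colorings depending on the board.
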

\begin{proof}
Suppose that $G$ contains such a 3-vertex $v$, and let $H_1$ be the poor
fragment containing two neighbors.  Claim~\ref{clm3} implies that
$H_1$ is a path on at most 5 vertices; further, $v$ must be adjacent
to the endvertices of $H_1$.  
If $H_1$ has order 2, then $G$ contains a 3-cycle, contradicting Claim~\ref{clm1}.
If $H_1$ has order 3, then $G$ contains Figure~\ref{jellyfish-pic}(b),
contradicting Claim~\ref{clm2}.
If $H_1$ has order 4, then we can color $G-H_1$ by criticality, and extend
the coloring to $G$
via one of the two extensions shown in Figure~\ref{order4-pic}, depending on
which colors are available at the 2-vertices. 
(By symmetry, assume that $v$ sees color $x$.  If $x$ is also seen by both
2-vertices in $H_1$ with edges to vertices outside $H_1$, then we use the
extension on the left; otherwise, the extension on the right.)
Finally, suppose that $H_1$ has order 5.  Now $G$ is the \hajos\ join of
$P^*$ and a smaller graph $J$; the copy of $P^*-e$ in $G$ consists
of $H_1^+$, $v$, and $v$'s neighbor outside of $H_1$.  
Corollary~\ref{hajos-cor} implies that $J$ is 3-critical.  Now $a(J)<a(G)$,
which contradicts our choice of $G$ as a minimal counterexample.
\end{proof}

%%
%  http://tinyurl.com/n3luzuh
%%
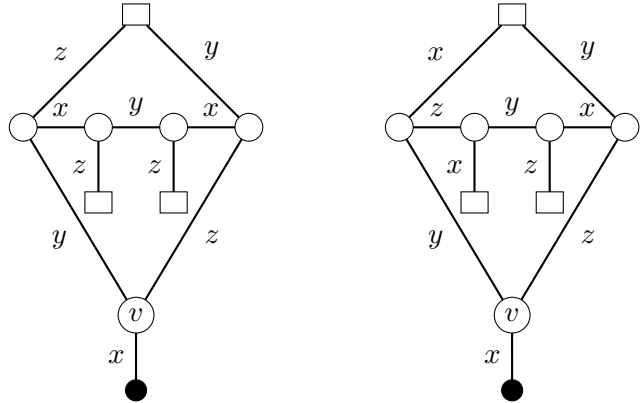
\begin{figure}[hbt]
\begin{center}
\begin{tikzpicture}[scale = 10]
\tikzstyle{VertexStyle} = []
\tikzstyle{EdgeStyle} = []
\tikzstyle{labeledThree}=[shape = circle, minimum size = 8pt, inner sep = 2.2pt, draw]
\tikzstyle{labeledTwo}=[shape = rectangle, minimum size = 8pt, inner sep = 3.2pt, draw]
\tikzstyle{unlabeledStyle}=[shape = circle, minimum size = 6pt, inner sep = 1.2pt, draw, fill]
\Vertex[style = labeledThree, x = 0.80, y = 0.80, L = \Large {$~$}]{v0}
\Vertex[style = labeledThree, x = 0.90, y = 0.80, L = \Large {$~$}]{v1}
\Vertex[style = labeledThree, x = 1, y = 0.80, L = \Large {$~$}]{v2}
\Vertex[style = labeledTwo, x = 0.85, y = 0.95, L = \Large {$\>$}]{v3}
\Vertex[style = labeledThree, x = 0.70, y = 0.80, L = \Large {$~$}]{v4}
\Vertex[style = labeledTwo, x = 0.80, y = 0.70, L = \Large {$\>$}]{v5}
\Vertex[style = labeledThree, x = 0.85, y = 0.55, L = \small {$v$}]{v6}
\Vertex[style = labeledTwo, x = 0.90, y = 0.70, L = \Large {$\>$}]{v7}
\Vertex[style = unlabeledStyle, x = 0.85, y = 0.45, L = \Large {$~$}]{v8}
\Vertex[style = labeledThree, x = 0.30, y = 0.80, L = \Large {$~$}]{v9}
\Vertex[style = labeledThree, x = 0.40, y = 0.80, L = \Large {$~$}]{v10}
\Vertex[style = labeledThree, x = 0.5, y = 0.80, L = \Large {$~$}]{v11}
\Vertex[style = labeledTwo, x = 0.35, y = 0.95, L = \Large {$\>$}]{v12}
\Vertex[style = labeledThree, x = 0.20, y = 0.80, L = \Large {$~$}]{v13}
\Vertex[style = labeledTwo, x = 0.30, y = 0.70, L = \Large {$\>$}]{v14}
\Vertex[style = labeledThree, x = 0.35, y = 0.55, L = \small {$v$}]{v15}
\Vertex[style = labeledTwo, x = 0.40, y = 0.70, L = \Large {$\>$}]{v16}
\Vertex[style = unlabeledStyle, x = 0.35, y = 0.45, L = \Large {$~$}]{v17}
\Edge[label = \small {$y$}, labelstyle={auto=right, fill=none}](v1)(v0)
\Edge[label = \small {$x$}, labelstyle={auto=right, fill=none}](v2)(v1)
\Edge[label = \small {$y$}, labelstyle={auto=right, fill=none}](v2)(v3)
\Edge[label = \small {$y$}, labelstyle={auto=right, fill=none}](v4)(v6)
\Edge[label = \small {$z$}, labelstyle={auto=right, fill=none}](v6)(v2)
\Edge[label = \small {$x$}, labelstyle={auto=right, fill=none}](v6)(v8)
\Edge[label = \small {$x$}, labelstyle={auto=right, fill=none}](v3)(v4)
\Edge[label = \small {$x$}, labelstyle={auto=right, fill=none}](v0)(v5)
\Edge[label = \small {$z$}, labelstyle={auto=right, fill=none}](v0)(v4)
\Edge[label = \small {$z$}, labelstyle={auto=right, fill=none}](v1)(v7)
\Edge[label = \small {$x$}, labelstyle={auto=right, fill=none}](v9)(v13)
\Edge[label = \small {$z$}, labelstyle={auto=right, fill=none}](v9)(v14)
\Edge[label = \small {$y$}, labelstyle={auto=right, fill=none}](v10)(v9)
\Edge[label = \small {$z$}, labelstyle={auto=right, fill=none}](v10)(v16)
\Edge[label = \small {$x$}, labelstyle={auto=right, fill=none}](v11)(v10)
\Edge[label = \small {$y$}, labelstyle={auto=right, fill=none}](v11)(v12)
\Edge[label = \small {$z$}, labelstyle={auto=right, fill=none}](v12)(v13)
\Edge[label = \small {$y$}, labelstyle={auto=right, fill=none}](v13)(v15)
\Edge[label = \small {$z$}, labelstyle={auto=right, fill=none}](v15)(v11)
\Edge[label = \small {$x$}, labelstyle={auto=right, fill=none}](v15)(v17)
\end{tikzpicture}
\caption{How to extend a coloring of $G\setminus H_1$ to $G$ when $H_1$ has
order 4 and a 3-vertex $v$ has two neighbors in $H_1$.}
\label{order4-pic}
\end{center}
\end{figure}

Claim~\ref{clm4} completes the proof of Lemma~\ref{lem:summary}, which ends this section.

\section{Reducibility}
\label{fixer-reducibility}

%\begin{lem}
%The graph in Figure~\ref{two-jellyfish-pic1} cannot appear as a subgraph of $G$.
%Furthermore, the graph cannot appear as a subgraph even if one or more pairs of
%2-vertices are identified.  
%%Thus, no 3-vertex has 3-neighbors in two distinct
%\label{3helper}
%\label{clm5}
%\end{lem}
%The final statement follows immediately from the first two.
%We defer the proofs of those two statements to
%Lemma~\ref{two-jellyfish} in
%Section~\ref{fixer-reducibility}. $\qed$
%\smallskip

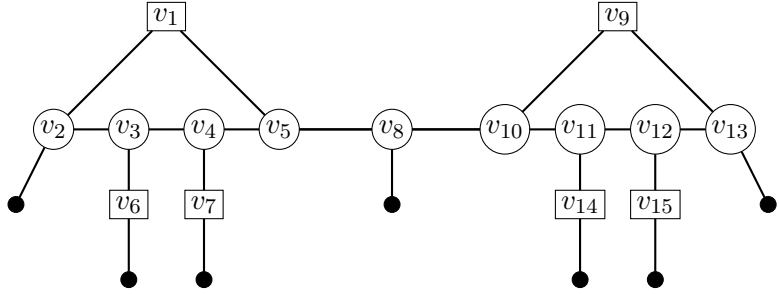
\begin{figure}[bht]
\begin{center}
\begin{tikzpicture}[scale = 10]
\tikzstyle{VertexStyle} = []
\tikzstyle{EdgeStyle} = []
\tikzstyle{labeledStyle}=[shape = circle, minimum size = 6pt, inner sep = 1.2pt, draw]
\tikzstyle{unlabeledStyle}=[shape = circle, minimum size = 6pt, inner sep = 1.2pt, draw, fill]
% Start right
\Vertex[style = labeledThree, x = 0.80, y = 0.70, L = \small {$v_{11}$}]{v0}
\Vertex[style = labeledThree, x = 0.70, y = 0.70, L = \small {$v_{10}$}]{v1}
\Vertex[style = labeledThree, x = 1.00, y = 0.70, L = \small {$v_{13}$}]{v2}
\Vertex[style = labeledTwo, x = 0.85, y = 0.85, L = \small {$v_9$}]{v3}
\Vertex[style = labeledThree, x = 0.90, y = 0.70, L = \small {$v_{12}$}]{v4}
\Vertex[style = labeledTwo, x = 0.80, y = 0.60, L = \small {$v_{14}$}]{v5}
\Vertex[style = labeledTwo, x = 0.90, y = 0.60, L = \small {$v_{15}$}]{v6}
% Start left
\Vertex[style = labeledThree, x = 0.20, y = 0.70, L = \small {$v_{3}$}]{v7}
\Vertex[style = labeledThree, x = 0.10, y = 0.70, L = \small {$v_{2}$}]{v8}
\Vertex[style = labeledThree, x = 0.40, y = 0.70, L = \small {$v_{5}$}]{v9}
\Vertex[style = labeledTwo, x = 0.25, y = 0.85, L = \small {$v_1$}]{v10}
\Vertex[style = labeledThree, x = 0.30, y = 0.70, L = \small {$v_{4}$}]{v11}
\Vertex[style = labeledTwo, x = 0.20, y = 0.60, L = \small {$v_{6}$}]{v12}
\Vertex[style = labeledTwo, x = 0.30, y = 0.60, L = \small {$v_{7}$}]{v13}
\Vertex[style = labeledThree, x = 0.55, y = 0.70, L = \small {$v_{8}$}]{v14}
\Vertex[style = unlabeledStyle, x = 0.55, y = 0.60, L = \small {}]{v15}
\Vertex[style = unlabeledStyle, x = 1.05, y = 0.60, L = \small {}]{v16}
\Vertex[style = unlabeledStyle, x = 0.05, y = 0.60, L = \small {}]{v17}
\Vertex[style = unlabeledStyle, x = 0.20, y = 0.50, L = \small {}]{v18}
\Vertex[style = unlabeledStyle, x = 0.30, y = 0.50, L = \small {}]{v19}
\Vertex[style = unlabeledStyle, x = 0.80, y = 0.50, L = \small {}]{v20}
\Vertex[style = unlabeledStyle, x = 0.90, y = 0.50, L = \small {}]{v21}
\Edge[label = \small {}, labelstyle={auto=right, fill=none}](v1)(v0)
\Edge[label = \small {}, labelstyle={auto=right, fill=none}](v3)(v1)
\Edge[label = \small {}, labelstyle={auto=right, fill=none}](v3)(v2)
\Edge[label = \small {}, labelstyle={auto=right, fill=none}](v4)(v2)
\Edge[label = \small {}, labelstyle={auto=right, fill=none}](v0)(v4)
\Edge[label = \small {}, labelstyle={auto=right, fill=none}](v5)(v0)
\Edge[label = \small {}, labelstyle={auto=right, fill=none}](v6)(v4)
\Edge[label = \small {}, labelstyle={auto=right, fill=none}](v7)(v11)
\Edge[label = \small {}, labelstyle={auto=right, fill=none}](v8)(v7)
\Edge[label = \small {}, labelstyle={auto=right, fill=none}](v10)(v8)
\Edge[label = \small {}, labelstyle={auto=right, fill=none}](v10)(v9)
\Edge[label = \small {}, labelstyle={auto=right, fill=none}](v11)(v9)
\Edge[label = \small {}, labelstyle={auto=right, fill=none}](v12)(v7)
\Edge[label = \small {}, labelstyle={auto=right, fill=none}](v13)(v11)
\Edge[label = \small {}, labelstyle={auto=right, fill=none}](v14)(v1)
\Edge[label = \small {}, labelstyle={auto=right, fill=none}](v14)(v9)
\Edge[label = \small {}, labelstyle={auto=right, fill=none}](v15)(v14)
\Edge[label = \small {}, labelstyle={auto=right, fill=none}](v16)(v2)
\Edge[label = \small {}, labelstyle={auto=right, fill=none}](v17)(v8)
\Edge[label = \small {}, labelstyle={auto=right, fill=none}](v20)(v5)
\Edge[label = \small {}, labelstyle={auto=right, fill=none}](v21)(v6)
\Edge[label = \small {}, labelstyle={auto=right, fill=none}](v19)(v13)
\Edge[label = \small {}, labelstyle={auto=right, fill=none}](v18)(v12)
\Edge[label = \small {}, labelstyle={auto=right, fill=none}](v14)(v9)
\Edge[label = \small {}, labelstyle={auto=right, fill=none}](v1)(v14)
\end{tikzpicture}
\end{center}
\caption{A subgraph forbidden from appearing in 3-critical graph $G$.
\label{two-jellyfish-pic2}}
\end{figure}

\begin{lem}
The subgraph shown in Figure~\ref{two-jellyfish-pic2} 
cannot appear in a triangle-free 3-critical graph.  Nor can it appear if we
identify one or two vertex pairs in $\{v_6,v_7,v_{14},v_{15}\}$.  Thus, no rich
vertex has neighbors in two distinct poor fragments each of order at least 4.
\label{two-jellyfish}
\end{lem}

\begin{proof}
We first consider the case where no pairs of 2-vertices are identified.
Let $L$ and $R$ denote the subgraphs of Figure~5 induced by vertices
$v_1,\ldots,v_7$ and $v_9, \ldots,v_{15}$, respectively.
Note that the $L$ and $R$ are symmetric.
By criticality, construct
a partial 3-coloring $G-E(L)$ 
%except the edges incident to $v_1$, $v_3$, and $v_4$.  
%Since $t=4$ (as defined in introduction to this section), we have
%$(3^3+1)/2=14$ boards.
The four vertices where colored and uncolored edges meet are $v_2$, $v_6$,
$v_7$, $v_5$, each of which sees exactly one color.
We have numerous possibilities for the ordered 4-tuple of colors seen by these
vertices (up to permuting color classes, we have 14 such possibilities).
%We begin by showing that for 12 of these 14 boards, we can extend the
%coloring to all of $G$.  
To show that we can extend these partial colorings to $G$, we can assume by
permuting color classes that $v_2$ sees $x$.  We begin by showing that we can
extend the coloring to all of $G$ unless the ordered 4-tuple of colors seen by
$v_2, v_6, v_7, v_5$ is $(x,x,y,y)$ or $(x,y,y,x)$. 

%If $v_2$ and $v_5$ see distinct colors, then
%Figure~\ref{two-jellyfish-colorings-pic}(a) shows 
%(possibly by permuting color classes) how to extend the coloring
%unless the ordered 4-tuple of colors seen by $v_2$, $v_6$, $v_7$, $v_5$ is
%$(x,x,y,y)$: simply color greedily along the path of uncolored edges, starting
%at $v_6$ and ending at $v_1$.  If $v_7$ and $v_5$ see distinct colors, this is
%clear.  If not, then we can assume that $v_2$ and $v_6$ see distinct colors, so
%we can swap the roles of $v_2$ and $v_5$.  More formally, we reflect the
%coloring in Figure~\ref{two-jellyfish-colorings-pic}(a) across a vertical line
%running through $v_1$.
If $v_2$ and $v_5$ see distinct colors, and the ordered 4-tuple of colors seen
by $v_2$, $v_6$, $v_7$, $v_5$ is not of the form $(x,x,y,y)$, then we can use
Figure~\ref{two-jellyfish-colorings-pic}(a) (possibly with colors permuted). If
$v_7$ and $v_5$ see distinct colors, then we extend the coloring using
Figure~\ref{two-jellyfish-colorings-pic}(a) and afterward color
greedily in order along the path $v_6v_3v_2v_1$ of uncolored edges. If not,
then we can assume that $v_2$ and $v_6$ see distinct colors, so we can swap
the roles of $v_2$ and $v_5$.  More formally, we reflect the coloring in
Figure~\ref{two-jellyfish-colorings-pic}(a) across a vertical line running
through $v_1$.

Now suppose instead that $v_2$ and $v_5$ see the same color, $x$.  If $v_6$ or
$v_7$ sees $x$, then we can extend the coloring as in
Figure~\ref{two-jellyfish-colorings-pic}(b): now color greedily along the
path of uncolored edges, ending at $v_7$ (again, if $v_6$ see $x$, but $v_7$
does not, reflect the coloring across a vertical line through $v_1$).  
Further, if $v_6$ and $v_7$ see distinct colors (other than $x$), then we can
color as in Figure~\ref{two-jellyfish-colorings-pic}(c).  Thus, we conclude
that we can extend the partial coloring to $G$ unless the ordered 4-tuple of
colors seen is $(x,x,y,y)$ or $(x,y,y,x)$.  Note that these two bad
possibilities differ in the colors used on \emph{two} pendant edges, even up to
all permutations of color classes.  Thus, if at least one pendant edge is not
yet colored, we can always find an extension of the partial coloring.

Now suppose that $G$ contains a copy of Figure~\ref{two-jellyfish-pic2}.
By criticality, we get a 3-coloring of all of $G$ except the edges with both
endpoints in $v_1,\ldots,v_{15}$. %Figure~\ref{two-jellyfish-pic2}.
Our goal is to color the two remaining edges incident to $v_8$ so that both $L$
and $R$ can be colored as above.  As we already noted, we must thus color $v_5v_8$ and
$v_8v_{10}$ so that neither $L$ or $R$ has as its ordered 4-tuple of colors seen
either $(x,x,y,y)$ or $(x,y,y,x)$.  

\begin{figure}
\begin{center}
\subfloat[]{
\makebox[.3\textwidth]{
\begin{tikzpicture}[scale = 10]
\tikzstyle{VertexStyle} = []
\tikzstyle{EdgeStyle} = []
\tikzstyle{unlabeledStyle}=[shape = circle, minimum size = 6pt, inner sep = 1.2pt, draw, fill]
\Vertex[style = labeledThree, x = 0.55, y = 0.70, L = \small {$v_3$}]{v0}
\Vertex[style = labeledThree, x = 0.45, y = 0.70, L = \small {$v_2$}]{v1}
\Vertex[style = labeledThree, x = 0.75, y = 0.70, L = \small {$v_5$}]{v2}
\Vertex[style = labeledTwo, x = 0.60, y = 0.85, L = \small {$v_1$}]{v3}
\Vertex[style = labeledThree, x = 0.65, y = 0.70, L = \small {$v_4$}]{v4}
\Vertex[style = labeledTwo, x = 0.55, y = 0.60, L = \small {$v_6$}]{v5}
\Vertex[style = labeledTwo, x = 0.65, y = 0.60, L = \small {$v_7$}]{v6}
\Vertex[style = unlabeledStyle, x = 0.40, y = 0.60, L = \small {}]{v7}
\Vertex[style = unlabeledStyle, x = 0.55, y = 0.50, L = \small {}]{v8}
\Vertex[style = unlabeledStyle, x = 0.65, y = 0.50, L = \small {}]{v9}
\Vertex[style = unlabeledStyle, x = 0.80, y = 0.60, L = \small {}]{v10}
\Edge[label = \small {}, labelstyle={auto=right, fill=none}](v1)(v0)
\Edge[label = \small {$x$}, labelstyle={auto=right, fill=none}](v2)(v3)
\Edge[label = \small {$z$}, labelstyle={auto=right, fill=none}](v2)(v4)
\Edge[label = \small {}, labelstyle={auto=right, fill=none}](v3)(v1)
\Edge[label = \small {$x$}, labelstyle={auto=right, fill=none}](v4)(v0)
\Edge[label = \small {}, labelstyle={auto=right, fill=none}](v5)(v0)
\Edge[label = \small {$y$}, labelstyle={auto=right, fill=none}](v6)(v4)
\Edge[label = \small {$x$}, labelstyle={auto=right, fill=none}](v1)(v7)
\Edge[label = \small {$*$}, labelstyle={auto=right, fill=none}](v8)(v5)
\Edge[label = \small {$\ne y$}, labelstyle={auto=right, fill=none}](v9)(v6)
\Edge[label = \small {$y$}, labelstyle={auto=right, fill=none}](v10)(v2)
\end{tikzpicture}
}}
~~
\subfloat[]{
\makebox[.3\textwidth]{
\begin{tikzpicture}[scale = 10]
\tikzstyle{VertexStyle} = []
\tikzstyle{EdgeStyle} = []
\tikzstyle{unlabeledStyle}=[shape = circle, minimum size = 6pt, inner sep = 1.2pt, draw, fill]
\Vertex[style = labeledThree, x = 0.55, y = 0.70, L = \small {$v_3$}]{v0}
\Vertex[style = labeledThree, x = 0.45, y = 0.70, L = \small {$v_2$}]{v1}
\Vertex[style = labeledThree, x = 0.75, y = 0.70, L = \small {$v_5$}]{v2}
\Vertex[style = labeledTwo, x = 0.60, y = 0.85, L = \small {$v_1$}]{v3}
\Vertex[style = labeledThree, x = 0.65, y = 0.70, L = \small {$v_4$}]{v4}
\Vertex[style = labeledTwo, x = 0.55, y = 0.60, L = \small {$v_6$}]{v5}
\Vertex[style = labeledTwo, x = 0.65, y = 0.60, L = \small {$v_7$}]{v6}
\Vertex[style = unlabeledStyle, x = 0.40, y = 0.60, L = \small {}]{v7}
\Vertex[style = unlabeledStyle, x = 0.55, y = 0.50, L = \small {}]{v8}
\Vertex[style = unlabeledStyle, x = 0.65, y = 0.50, L = \small {}]{v9}
\Vertex[style = unlabeledStyle, x = 0.80, y = 0.60, L = \small {}]{v10}
\Edge[label = \small {}, labelstyle={auto=right, fill=none}](v1)(v0)
\Edge[label = \small {}, labelstyle={auto=right, fill=none}](v3)(v1)
\Edge[label = \small {}, labelstyle={auto=right, fill=none}](v3)(v2)
\Edge[label = \small {}, labelstyle={auto=right, fill=none}](v4)(v2)
\Edge[label = \small {}, labelstyle={auto=right, fill=none}](v5)(v0)
\Edge[label = \small {}, labelstyle={auto=right, fill=none}](v6)(v4)
\Edge[label = \small {$x$}, labelstyle={auto=right, fill=none}](v1)(v7)
\Edge[label = \small {$*$}, labelstyle={auto=right, fill=none}](v8)(v5)
\Edge[label = \small {$x$}, labelstyle={auto=right, fill=none}](v9)(v6)
\Edge[label = \small {$x$}, labelstyle={auto=right, fill=none}](v10)(v2)
\Edge[label = \small {$x$}, labelstyle={auto=right, fill=none}](v4)(v0)
\end{tikzpicture}
}}
~~
\subfloat[]{
\makebox[.3\textwidth]{
\begin{tikzpicture}[scale = 10]
\tikzstyle{VertexStyle} = []
\tikzstyle{EdgeStyle} = []
\tikzstyle{unlabeledStyle}=[shape = circle, minimum size = 6pt, inner sep = 1.2pt, draw, fill]
\Vertex[style = labeledThree, x = 0.55, y = 0.70, L = \small {$v_3$}]{v0}
\Vertex[style = labeledThree, x = 0.45, y = 0.70, L = \small {$v_2$}]{v1}
\Vertex[style = labeledThree, x = 0.75, y = 0.70, L = \small {$v_5$}]{v2}
\Vertex[style = labeledTwo, x = 0.60, y = 0.85, L = \small {$v_1$}]{v3}
\Vertex[style = labeledThree, x = 0.65, y = 0.70, L = \small {$v_4$}]{v4}
\Vertex[style = labeledTwo, x = 0.55, y = 0.60, L = \small {$v_6$}]{v5}
\Vertex[style = labeledTwo, x = 0.65, y = 0.60, L = \small {$v_7$}]{v6}
\Vertex[style = unlabeledStyle, x = 0.40, y = 0.60, L = \small {}]{v7}
\Vertex[style = unlabeledStyle, x = 0.55, y = 0.50, L = \small {}]{v8}
\Vertex[style = unlabeledStyle, x = 0.65, y = 0.50, L = \small {}]{v9}
\Vertex[style = unlabeledStyle, x = 0.80, y = 0.60, L = \small {}]{v10}
\Edge[label = \small {$y$}, labelstyle={auto=right, fill=none}](v1)(v0)
\Edge[label = \small {$z$}, labelstyle={auto=right, fill=none}](v3)(v1)
\Edge[label = \small {$z$}, labelstyle={auto=right, fill=none}](v5)(v0)
\Edge[label = \small {$y$}, labelstyle={auto=right, fill=none}](v6)(v4)
\Edge[label = \small {$x$}, labelstyle={auto=right, fill=none}](v1)(v7)
\Edge[label = \small {$y$}, labelstyle={auto=right, fill=none}](v8)(v5)
\Edge[label = \small {$z$}, labelstyle={auto=right, fill=none}](v9)(v6)
\Edge[label = \small {$x$}, labelstyle={auto=right, fill=none}](v10)(v2)
\Edge[label = \small {$x$}, labelstyle={auto=right, fill=none}](v4)(v0)
\Edge[label = \small {$y$}, labelstyle={auto=right, fill=none}](v2)(v3)
\Edge[label = \small {$z$}, labelstyle={auto=right, fill=none}](v2)(v4)
\end{tikzpicture}
}}
\end{center}
\caption{Extensions for part of Figure~\ref{two-jellyfish-pic2}, based on the
colors seen by $v_2$, $v_5$, $v_6$, and $v_7$. \label{two-jellyfish-colorings-pic}}
\end{figure}
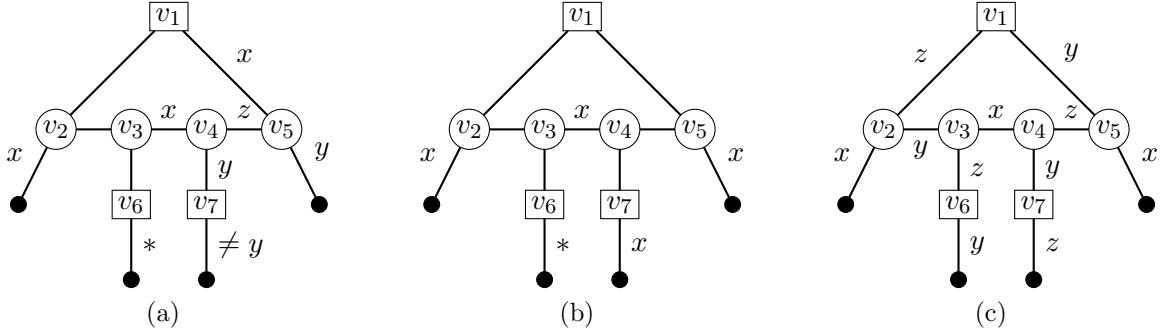

Given the colors seen by $v_2$, $v_6$, $v_7$, at most one choice of color for $v_5v_8$ gives a bad 4-tuple
for $L$.  Similarly, at most one choice of color for $v_8v_{10}$ gives a bad
4-tuple for $R$.  We can color the edges as desired unless the color
that is bad on $v_5v_8$ for $L$ is the same as the color that is bad
on $v_8v_{10}$ for $R$, and that color, say $x$, is different from 
the color $y$ seen by $v_8$.  So suppose this is true.  
Now perform an $(x,y)$-swap at $v_8$.  If this Kempe chain ends in neither
$L$ nor $R$, then we color $v_5v_8$ and $v_8v_{10}$ arbitrarily.
Now we can extend the coloring to both $L$ and $R$.  So suppose instead that
the Kempe chain ends in $L$ (by symmetry).  Now we can color $L$, since $v_5v_8$ is
uncolored.  Afterward, the color for $v_8v_{10}$ is determined, and we can color
$R$.  This completes the case where no pairs of 2-vertices are identified.
\bigskip

Now we consider the case where two vertex pairs in $\{v_6,v_7,v_{14},v_{15}\}$
are identified.  Since $G$ has no 3-cycles, each of $v_6,v_7$ must be identified
with one of $v_{14},v_{15}$.  By criticality, color all edges except those with
both endpoints in $v_1,\ldots,v_{15}$.  
Note that we have only 3 incident colored edges. %so 5 possible boards.
As above, we now extend the coloring to $G$, using
Figure~\ref{two-jellyfish-colorings-pic}.

%For board $(X,X,X)$, 
Suppose that all colored incident edges use the same color, say $x$.
Now color $v_5v_8$ and $v_4v_7$ with $y$ and color $v_8v_{10}$
and $v_{11}v_{14}$ with $z$.  (Perhaps $v_7=v_{14}$, but this is okay.)  Now we
can extend the coloring to each side,
as in Figure~\ref{two-jellyfish-colorings-pic}(a).  A similar strategy works in
every case except when $v_2$ and $v_{13}$ see a common color, say $x$, and $v_8$
sees some other color, say $y$.
We always color $v_5v_8$ and $v_8v_{10}$ so that
their colors differ from those seen by $v_2$ and $v_{13}$, respectively.
Next, we color $v_4v_7$ and $v_{11}v_{14}$ to match $v_5v_8$ and $v_8v_{10}$,
respectively.  Finally, we can color each side as in
Figure~\ref{two-jellyfish-colorings-pic}(a), reflected.  
%
%So consider case $(X,Y,X)$.
%
So suppose that $v_2$ and $v_{13}$ see $x$ and $v_8$ sees $y$.  Color $v_5v_8$
with $x$ and $v_8v_{10}$ with $z$.  Now color $v_3v_4$ with $x$ and $v_{14}v_i$
with $y$, where $i\in\{3,4\}$.  To extend $L$, we assume by symmetry that
$v_{14}=v_7$, so $v_7v_4$ is colored $y$ and $v_3v_6$ is uncolored.  Greedily
color the path $v_4v_5v_1v_2v_3v_6$, starting from $v_4$.  Now since $v_{10}$
and $v_{14}$ see different colors, we extend $R$ as in
Figure~\ref{two-jellyfish-colorings-pic}(a).  
This completes the case of two pairs of identified vertices.
\bigskip

Now suppose that one vertex pair in $\{v_6,v_7,v_{14},v_{15}\}$ is
identified; we consider three cases.  The identified pair is either
$(v_6,v_{15})$, $(v_7,v_{14})$, or $(v_6,v_{14})$; we call these cases
``outside'', ``inside'', and ``mixed''.  In each case, five vertices in
$v_1,\ldots, v_{15}$ see colors, but we initially consider only the colors seen
by $v_2$, $v_8$, and $v_{13}$.  Thus, for example, we write the 3-tuple
$(x,y,z)$ to signify that $v_2$ sees $x$, $v_8$ sees $y$, and $v_{13}$ sees $z$.

First consider outside.  
Suppose we have the 3-tuple $(x,x,y)$.  Color $v_3v_6$ with $x$, color $v_5v_8$ and
$v_{12}v_{15}$ with $y$, and color $v_8v_{10}$ with $z$.
We can extend the
coloring on each side as in Figure~\ref{two-jellyfish-colorings-pic}(a).
A similar strategy works for 3-tuples $(x,y,y)$ and $(x,y,z)$.  Consider instead
$(x,y,x)$.  Now color $v_5v_8$ and $v_{12}v_{15}$ with $x$ and color
$v_8v_{10}$ with $z$.  We can color $R$ as in
Figure~\ref{two-jellyfish-colorings-pic}(a) and $L$ as in 
Figure~\ref{two-jellyfish-colorings-pic}(b).  Finally, consider the 3-tuple $(x,x,x)$.
If $v_7$ sees $x$, then color $v_{12}v_{15}$ with $x$,
color $v_5v_8$ with $y$, and color $v_8v_{10}$ with $z$.  Now
color both $L$ and $R$ as in Figure~\ref{two-jellyfish-colorings-pic}(a).  
Otherwise, by symmetry $v_7$ sees $y$.  
Now color $v_{12}v_{15}$ with $x$,  $v_8v_{10}$ with $y$, and $v_5v_8$ with $z$. 
We can again color both $L$ and $R$ as in Figure~\ref{two-jellyfish-colorings-pic}(a).  

Now consider inside.
This case is similar to above.  
Suppose that $v_2$, $v_8$, $v_{13}$ see some ordered triple \emph{other} than $(x,y,x)$.
%\emph{other} than $(X,Y,X)$. 
Color $v_5v_8$ and $v_8v_{10}$ to differ from the colors seen by $v_2$ and
$v_{13}$, respectively.  Now color $v_4v_7$ and $v_{11}v_{14}$ to match
$v_5v_8$ and $v_8v_{10}$, respectively.  Finally, color both $L$ and $R$ as in
Figure~\ref{two-jellyfish-colorings-pic}(a).  So suppose $v_2$, $v_8$, $v_{13}$
sees $(x,y,x)$.
If $v_6$ sees $x$, then color
$v_5v_8$ with $x$ and $v_8v_{10}$ with $z$.  Now we can color $R$ first,
then color $L$, as in Figure~\ref{two-jellyfish-colorings-pic}(b), reflected. 
So assume $v_6$ does not see $x$.  Now color $v_5v_8$ with $z$ and $v_8v_{10}$ with
$x$. Color $R$ first, then color $L$ as in
Figure~\ref{two-jellyfish-colorings-pic}(a) reflected, since $v_2$ and $v_6$ see distinct
colors.

Finally, consider mixed.  Recall that $v_6$ and $v_{14}$ are identified.
Suppose the triple seen by $v_2$, $v_8$, $v_{13}$ is something other than
other than $(x,x,x)$ and $(x,y,y)$.  If $v_{13}$ and
$v_{15}$ see distinct colors, then color $v_8v_{10}$ with a color not seen by
$v_{13}$.  Now color $L$, then extend to $R$ as in
Figure~\ref{two-jellyfish-colorings-pic}(a).  Otherwise $v_{13}$ and $v_{15}$
see the same color, so use that color on $v_8v_{10}$.  Now color $L$, then
extend to $R$,  as in Figure~\ref{two-jellyfish-colorings-pic}(b).
Instead, consider $(x,x,x)$.  Color $v_3v_6$ with $x$,
color $v_5v_8$ with $y$, and both $v_8v_{10}$ and $v_{11}v_{14}$ with $z$.  Now
extend both sides as in Figure~\ref{two-jellyfish-colorings-pic}(a).  
Finally, consider $(x,y,y)$.  If $v_{15}$ sees a color other than $y$, then
color $v_8v_{10}$ to avoid the color seen by $v_{15}$.  Now color $L$,
followed by $R$, as in Figure~\ref{two-jellyfish-colorings-pic}(a).
Similarly, if $v_7$ sees $x$, then color $v_5v_8$ with
$x$ and color $R$, followed by $L$, asi in
Figure~\ref{two-jellyfish-colorings-pic}(a)  Likewise, if $v_7$ sees $y$,
then color $v_5v_8$ with $z$ and color $R$, followed by $L$.
Thus, we conclude that $v_7$ sees $z$ and $v_{15}$ sees $y$.  Now perform an
$(x,y)$-swap at $v_8$.  The resulting coloring will be one of the cases above.
\end{proof}

%\bigskip

\begin{lem}
Let $G'$ be a 3-colorable graph having the configuration in
Figure~\ref{fig:KempePath} as a subgraph.  Now $G'$ has a 3-coloring such that
at least one of the Kempe chains through edge $w'w$ is a path, not a cycle.
\label{lem:new1}
\end{lem}
\begin{proof}
By possibly permuting colors, assume that the three edges incident with $w$ are
given the colors indicated; also assume that, for every 3-coloring of $G'$ with
these colors, both the $(x,y)$-chain and the $(x,z)$-chain through edges $w'w$
are cycles.  This implies that swapping colors in any $(x,y)$-chain or
$(x,z)$-chain that is not a cycle will not change the color of any edge at $w$.
Let the unshown neighbors of $u_1$, $u_2$, $u_3$, $v_3$ be $u'_1$, $u'_2$,
$u'_3$, $v'_3$, respectively.

Let $C_{x,y}$ denote the $(x,y)$-Kempe chain (which must be a cycle) containing
the path $w'wv_1$.  If $C_{x,y}$ includes $v_1u_1u'_1$, then an $(x,z)$-swap at
$u_1$ will reroute it along $v_1v_2$; so we assume that $C_{x,y}$ includes
$v_1v_2$.  So $v_1u_1$ has color $z$.

We now prove the following: \textit{If $C_{x,y}$ contains the path
$w'wv_1v_2$, then (a) $C_{x,y}$ contains edge $v_2v_3$, and (b) edges $u_1u'_1$
and $u_2u'_2$ have the same color and are the end-edges of an $(x,y)$-chain.}

To prove (a), suppose to the contrary that $C_{x,y}$ includes $v_2u_2u'_2$.  By
an $(x,y)$-swap at $u_1$ if necessary, we can assume that $u_1u'_1$ has color
$x$.  If the $(x,z)$-chain $P_{x,z}(u_2)$ starting along $u_2u'_2$ does not
include the path $u'_1u_1v_1v_2v_3$, then swapping colors on $P_{x,z}(u_2)$ will
terminate $C_{x,y}$ at $u_2$.  And if $P_{x,z}(u_2)$ does include this path,
then swapping colors on $P_{x,z}(u_2)$ will reroute $C_{x,y}$ along $w'wv_1u_1$,
where it terminates.  Both of these possibilities contradict our assumptions,
and this proves (a).  Thus edges $v_1u_1$ and $v_2u_2$ are both colored $z$.

Suppose now that (b) is false.  Now by $(x,y)$-interchanges at $u_1$ and/or
$u_2$, if necessary, we can make $u_1u'_1$ and $u_2u'_2$ have colors $x$ and
$y$, respectively.  Now an $(x,z)$-swap at $u_2$ reroutes $C_{x,y}$ along
$v_1u_1$, where it terminates.  This contradiction proves (b).  By swapping
colors in the $(x,y)$-chain linking $u_1u'_1$ and $u_2u'_2$, if necessary,
assume that these edges both have color $y$.  Note that an $(x,z)$-interchange
at $u_3$ now cannot change the color of edge $v_1v_2$.

If $C_{x,y}$ includes $v_3u_3u'_3$, then an $(x,z)$-swap at $u_3$ will reroute
it along $v_3v'_3$; so we will assume that $C_{x,y}$ contains the path
$w'wv_1v_2v_3v'_3$ and that all three edges $v_iu_i$ are colored $z$.  By (b)
applied the the current coloring, there is still an $(x,y)$-chain linking
$u_1u'_1$ and $u_2u'_2$.  Thus the $(x,y)$-chains containing $u_2u'_2$ and
$u_3u'_3$ are disjoint.  By swapping colors on these chains if necessary, we can
assume that both of these edges have color $x$.  Now interchanging colors $y$
and $z$ on the Kempe chain $u_2v_2v_3u_3$ will reroute $C_{x,y}$ along $v_2u_2$,
which contradicts (a).  This contradiction completes the proof.
\end{proof}

\begin{figure}
\centering
\begin{tikzpicture}[scale = 15]
\tikzstyle{VertexStyle} = []
\tikzstyle{EdgeStyle} = []
\tikzstyle{labeledStyle}=[shape = circle, minimum size = 6pt, inner sep = 1.2pt, draw]
\tikzstyle{unlabeledStyle}=[shape = circle, minimum size = 6pt, inner sep = 1.2pt, draw, fill]
\Vertex[style = labeledThree, x = 0.550, y = 0.700, L = \small {$w'$}]{v0}
\Vertex[style = labeledThree, x = 0.650, y = 0.700, L = \small {$\,w\,$}]{v1}
\Vertex[style = labeledThree, x = 0.750, y = 0.700, L = \small {$v_1$}]{v2}
\Vertex[style = labeledThree, x = 0.850, y = 0.700, L = \small {$v_2$}]{v3}
\Vertex[style = labeledThree, x = 0.950, y = 0.700, L = \small {$v_3$}]{v4}
\Vertex[style = unlabeledStyle, x = 1.050, y = 0.700, L = \small {}]{v5}
\Vertex[style = labeledTwo, x = 0.950, y = 0.800, L = \small {$u_3$}]{v6}
\Vertex[style = unlabeledStyle, x = 0.950, y = 0.900, L = \small {}]{v7}
\Vertex[style = unlabeledStyle, x = 0.850, y = 0.900, L = \small {}]{v8}
\Vertex[style = labeledTwo, x = 0.850, y = 0.800, L = \small {$u_2$}]{v9}
\Vertex[style = labeledTwo, x = 0.750, y = 0.800, L = \small {$u_1$}]{v10}
\Vertex[style = unlabeledStyle, x = 0.750, y = 0.900, L = \tiny {}]{v11}
\Vertex[style = unlabeledStyle, x = 0.650, y = 0.800, L = \tiny {}]{v12}
\Vertex[style = unlabeledStyle, x = 0.450, y = 0.700, L = \tiny {}]{v13}
\Vertex[style = unlabeledStyle, x = 0.550, y = 0.800, L = \tiny {}]{v14}
\Edge[label = \small {$x$}, labelstyle={auto=left, fill=none}](v1)(v0)
\Edge[label = \small {$y$}, labelstyle={auto=right, fill=none}](v1)(v2)
\Edge[label = \tiny {}, labelstyle={auto=right, fill=none}](v3)(v4)
\Edge[label = \tiny {}, labelstyle={auto=right, fill=none}](v3)(v2)
\Edge[label = \tiny {}, labelstyle={auto=right, fill=none}](v5)(v4)
\Edge[label = \tiny {}, labelstyle={auto=right, fill=none}](v6)(v7)
\Edge[label = \tiny {}, labelstyle={auto=right, fill=none}](v6)(v4)
\Edge[label = \tiny {}, labelstyle={auto=right, fill=none}](v9)(v8)
\Edge[label = \tiny {}, labelstyle={auto=right, fill=none}](v9)(v3)
\Edge[label = \tiny {}, labelstyle={auto=right, fill=none}](v10)(v11)
\Edge[label = \tiny {}, labelstyle={auto=right, fill=none}](v10)(v2)
\Edge[label = \small {$z$}, labelstyle={auto=right, fill=none}](v12)(v1)
\Edge[label = \tiny {}, labelstyle={auto=right, fill=none}](v0)(v14)
\Edge[label = \tiny {}, labelstyle={auto=right, fill=none}](v0)(v13)
\end{tikzpicture}
\caption{If a graph contains this configuration and has a 3-coloring, then it
has a 3-coloring where one Kempe chain through edge $w'w$ is a path, not a cycle.
\label{fig:KempePath}
}
\end{figure}
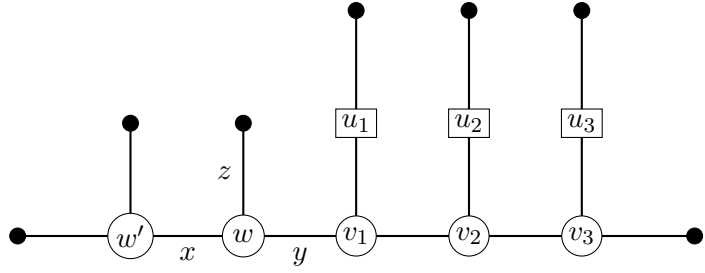

\begin{lem}
The configuration shown in Figure~\ref{fig:2a} is not a
subgraph of any 3-critical graph.
%A 3-critical graph cannot have the configuration shown in Figure~\ref{fig:2a} as a
%subgraph.
\label{lem:new2}
\label{lem:fig2a}
\end{lem}
\begin{proof}
Suppose $G$ is a 3-cirtical graph containing this configuration as a subgraph.
%Let the leftmost 2-vertex in Figure~\ref{fig:2a} be $u$, with neighbors $w$ (shown)
%and $w'$ (not shown).  
Let $w'$ be the unshown neighbor of $u$ in Figure~\ref{fig:2a}.
Since $G$ is 3-critical, $G-uw$ has a 3-coloring
that cannot be extended to a 3-coloring of $G$, and which therefore gives a
3-coloring of the graph $G'$ obtained from $G$ by contracting edge $uw$ (so
replacing path $w'uw$ by a single edge $w'w$).

By Lemma~\ref{lem:new1}, $G'$ has a 3-coloring such that one of the Kempe chains
through $w'w$ is a path $P$.  Transferring this coloring to $G$, it is 
proper at every vertex except $u$, where the edges $w'u$ and $uw$ have the same
color.  However, swapping colors in a maximal segment of $P$ ending at $u$
creates a proper coloring of $G$, and this contradiction completes the proof.
\end{proof}

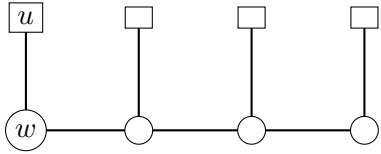
\begin{figure}
\centering
%\subfloat[]{
%\makebox[.3\textwidth]{
\begin{tikzpicture}[scale = 15]
\tikzstyle{VertexStyle} = []
\tikzstyle{EdgeStyle} = []
\tikzstyle{labeledThree}=[shape = circle, minimum size = 8pt, inner sep = 2.2pt, draw]
\tikzstyle{labeledTwo}=[shape = rectangle, minimum size = 8pt, inner sep = 3.2pt, draw]
\tikzstyle{unlabeledStyle}=[shape = circle, minimum size = 6pt, inner sep = 1.2pt, draw, fill]
\Vertex[style = labeledTwo, x = 0.45, y = 0.75, L = \small {$u$}]{v0}
\Vertex[style = labeledThree, x = 0.45, y = 0.65, L = \small {$w$}]{v1}
\Vertex[style = labeledThree, x = 0.55, y = 0.65, L = \Large {$~$}]{v2}
\Vertex[style = labeledTwo, x = 0.55, y = 0.75, L = \Large {$\>$}]{v3}
\Vertex[style = labeledTwo, x = 0.65, y = 0.75, L = \Large {$\>$}]{v4}
\Vertex[style = labeledThree, x = 0.65, y = 0.65, L = \Large {$~$}]{v5}
\Vertex[style = labeledThree, x = 0.75, y = 0.65, L = \Large {$~$}]{v6}
\Vertex[style = labeledTwo, x = 0.75, y = 0.75, L = \Large {$\>$}]{v7}
\Edge[](v6)(v7)
\Edge[](v6)(v5)
\Edge[](v5)(v4)
\Edge[](v5)(v2)
\Edge[](v2)(v3)
\Edge[](v2)(v1)
\Edge[](v1)(v0)
\end{tikzpicture}
%}}
\caption{
This configuration cannot appear in a 3-critical graph.
\label{fig:2a}
}
\end{figure}

\begin{figure}
\centering
\begin{tikzpicture}[scale = 13]
\tikzstyle{VertexStyle} = []
\tikzstyle{EdgeStyle} = []
%\tikzset{curveinscope/.style={every path/.style={draw=white, double distance=1pt, line width=2pt, double=red, text=red}}}
%\tikzset{curveinscope/.style={every path/.style={text=white}}}
\tikzstyle{labeledStyle}=[shape = circle, minimum size = 6pt, inner sep = 1.2pt, draw]
\tikzstyle{unlabeledStyle}=[shape = circle, minimum size = 6pt, inner sep = 1.2pt, draw, fill]
\Vertex[style = labeledThree, x = 0.550, y = 0.700, L = \large {$~~$}]{v0}
\Vertex[style = labeledThree, x = 0.450, y = 0.700, L = \large {$~~$}]{v1}
\Vertex[style = labeledThree, x = 0.750, y = 0.700, L = \large {$~~$}]{v2}
\Vertex[style = labeledTwo, x = 0.600, y = 0.850, L = \large {$~~$}]{v3}
\Vertex[style = labeledThree, x = 0.650, y = 0.700, L = \large {$~~$}]{v4}
\Vertex[style = labeledTwo, x = 0.650, y = 0.600, L = \small
{{\mbox{~~}}}]{v5}
\Vertex[style = unlabeledStyle, x = 0.350, y = 0.700, L = \small {}]{v6}
\Vertex[style = unlabeledStyle, x = 0.650, y = 0.500, L = \small {}]{v7}
\Vertex[style = labeledThree, x = 0.850, y = 0.700, L = \large {$~~$}]{v8}
\Vertex[style = labeledTwo, x = 0.700, y = 0.850, L = \large {$~~$}]{v9}
\Vertex[style = unlabeledStyle, x = 0.950, y = 0.700, L = \small {}]{v10}
\Edge[label = \small {$y$}, labelstyle={auto=right, fill=none}](v1)(v0)
\Edge[label = \small {$x$}, labelstyle={auto=left, fill=none}](v1)(v6)
\Edge[label = \small {$x$}, labelstyle={auto=right, fill=none}](v2)(v3)
\Edge[label = \small {$y$}, labelstyle={auto=left, fill=none}](v2)(v4)
\Edge[label = \small {$z$}, labelstyle={auto=right, fill=none}](v3)(v1)
\Edge[label = \small {$x$}, labelstyle={auto=left, fill=none}](v4)(v0)
\Edge[label = \small {$z$}, labelstyle={auto=right, fill=none}](v5)(v4)
\Edge[label = \small {$x$ or $y$}, labelstyle={auto=right, fill=none}](v7)(v5)
\Edge[label = \small {$z$}, labelstyle={auto=left, fill=none}](v8)(v2)
\Edge[label = \small {$z$}, labelstyle={auto=right, fill=none}](v9)(v0)
\Edge[label = \small {$y$ or $x$}, labelstyle={auto=left, fill=none}](v9)(v8)
\Edge[label = \small {$x$ or $y$}, labelstyle={auto=right, fill=none}](v8)(v10)
\end{tikzpicture}

\caption{How to extend a 3-coloring of $G-E(H_1^+)$ to $G$ when $H_1$ is a poor
fragment of order 5 and at least two of its boundary edges have the same color.
\label{fig:order5extension}}
\end{figure}
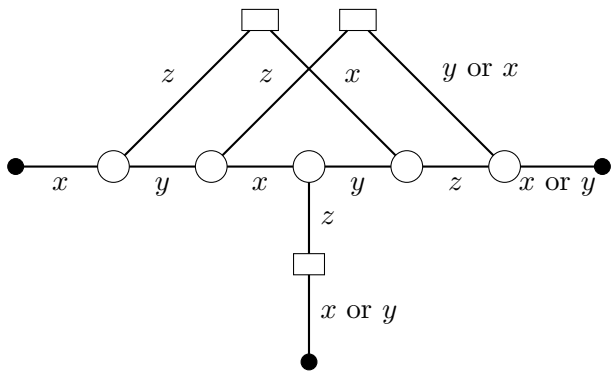

\begin{lem}
Let $G$ be a 3-critical graph. % other than $P^*$.
%as in Section~\ref{sec:discharging}.  
No rich vertex of $G$ has
neighbors in three poor fragments with orders 5, 3, and 3.
\label{lem:new3}
\end{lem}
\begin{proof}
Suppose that $w$ is a vertex of $G$ with neighbors in three poor fragments $H_1$,
$H_2$, $H_3$ of orders 5, 3, 3, respectively.  Exactly one 2-vertex has a
single neighbor in $H_1$, and its other neighbor cannot be in both $H_2$
and $H_3$; assume it is not in $H_2$, so that $H_1^+$ and $H_2^+$ are disjoint.
(We will make no further use of $H_3$.)

Consider a 3-coloring of $G-E(H_1^+)$.  There are three colored edges that are
incident with vertices of $H_1^+$; call them \emph{boundary} edges.  It is
easy to see that if any two of them have the same color, then the coloring can
be extended to all of $G$; see Figure~\ref{fig:order5extension}.  This
contradiction shows that the boundary edges must have distinct colors.  Thus,
there is a 3-coloring of the graph $G'$ formed from $G$ by contracting $H_1^+$
to a single vertex $w'$.  Note that $w'$, $w$, and $H_2^+$ form in $G'$ the
configuration shown in Figure~\ref{fig:KempePath}.  By Lemma~\ref{lem:new1},
there is a coloring of $G'$ such that one of the Kempe chains containing $w'w$
is a path $P$.

In $G$, path $P$ consists of two disjoint paths, each ending with one of the
boundary edges.  Swapping colors in one of these paths will create a 3-coloring
of $G-E(H_1^+)$ such that two of the boundary edges have the same color.  As we
have seen, this coloring can be extended to the whole graph $G$, and this
contradiction completes the proof.
\end{proof}

\section{An Improved Bound using a Computer}
\label{computer}

\begin{figure}[ht]
\begin{center}
\subfloat[]{
\makebox[.25\textwidth]{
\begin{tikzpicture}[scale = 6]
\tikzstyle{VertexStyle} = []
\tikzstyle{EdgeStyle} = []
\tikzstyle{labeledStyle}=[shape = circle, minimum size = 6pt, inner sep = 1.2pt, draw]
\tikzstyle{unlabeledStyle}=[shape = circle, minimum size = 6pt, inner sep = 1.2pt, draw, fill]
\Vertex[style = labeledThree, x = 0.600000023841858, y = 0.75, L = \Large {$~$}]{v0}
\Vertex[style = labeledTwo, x = 0.5, y = 0.650000005960464, L = \Large {$\>$}]{v1}
\Vertex[style = labeledThree, x = 0.300000011920929, y = 0.75, L = \Large {$~$}]{v2}
\Vertex[style = labeledThree, x = 0.400000005960464, y = 0.75, L = \Large {$~$}]{v3}
\Vertex[style = labeledTwo, x = 0.300000011920929, y = 0.650000005960464, L = \Large {$\>$}]{v4}
\Vertex[style = labeledThree, x = 0.5, y = 0.849999994039536, L = \Large {$~$}]{v5}
\Edge[label = \Large {}, labelstyle={auto=right, fill=none}](v1)(v0)
\Edge[label = \Large {}, labelstyle={auto=right, fill=none}](v1)(v3)
\Edge[label = \Large {}, labelstyle={auto=right, fill=none}](v3)(v2)
\Edge[label = \Large {}, labelstyle={auto=right, fill=none}](v4)(v2)
\Edge[label = \Large {}, labelstyle={auto=right, fill=none}](v5)(v0)
\Edge[label = \Large {}, labelstyle={auto=right, fill=none}](v5)(v3)
\end{tikzpicture}
}}
\subfloat[]{
\makebox[.25\textwidth]{
\begin{tikzpicture}[scale = 6]
\tikzstyle{VertexStyle} = []
\tikzstyle{EdgeStyle} = []
\tikzstyle{labeledStyle}=[shape = circle, minimum size = 6pt, inner sep = 1.2pt, draw]
\tikzstyle{unlabeledStyle}=[shape = circle, minimum size = 6pt, inner sep = 1.2pt, draw, fill]
\Vertex[style = labeledThree, x = 0.60, y = 0.75, L = \Large {$~$}]{v0}
\Vertex[style = labeledThree, x = 0.70, y = 0.75, L = \Large {$~$}]{v1}
\Vertex[style = labeledThree, x = 0.80, y = 0.75, L = \Large {$~$}]{v2}
\Vertex[style = labeledTwo, x = 0.70, y = 0.65, L = \Large {$\>$}]{v3}
\Vertex[style = labeledTwo, x = 0.60, y = 0.65, L = \Large {$\>$}]{v4}
\Vertex[style = labeledTwo, x = 0.80, y = 0.65, L = \Large {$\>$}]{v5}
\Vertex[style = labeledThree, x = 0.20, y = 0.75, L = \Large {$~$}]{v6}
\Vertex[style = labeledThree, x = 0.30, y = 0.75, L = \Large {$~$}]{v7}
\Vertex[style = labeledThree, x = 0.40, y = 0.75, L = \Large {$~$}]{v8}
\Vertex[style = labeledTwo, x = 0.20, y = 0.65, L = \Large {$\>$}]{v9}
\Vertex[style = labeledThree, x = 0.5, y = 0.85, L = \Large {$~$}]{v10}
\Edge[label = \Large {}, labelstyle={auto=right, fill=none}](v1)(v0)
\Edge[label = \tiny {}, labelstyle={auto=right, fill=none}](v1)(v2)
\Edge[label = \tiny {}, labelstyle={auto=right, fill=none}](v3)(v1)
\Edge[label = \tiny {}, labelstyle={auto=right, fill=none}](v4)(v0)
\Edge[label = \tiny {}, labelstyle={auto=right, fill=none}](v5)(v2)
\Edge[label = \tiny {}, labelstyle={auto=right, fill=none}](v6)(v7)
\Edge[label = \tiny {}, labelstyle={auto=right, fill=none}](v7)(v4)
\Edge[label = \tiny {}, labelstyle={auto=right, fill=none}](v8)(v7)
\Edge[label = \tiny {}, labelstyle={auto=right, fill=none}](v9)(v6)
\Edge[label = \tiny {}, labelstyle={auto=right, fill=none}](v10)(v0)
\Edge[label = \tiny {}, labelstyle={auto=right, fill=none}](v10)(v8)
\end{tikzpicture}
}}
\subfloat[]{
\makebox[.25\textwidth]{
\begin{tikzpicture}[scale = 6]
\tikzstyle{VertexStyle} = []
\tikzstyle{EdgeStyle} = []
\tikzstyle{labeledStyle}=[shape = circle, minimum size = 6pt, inner sep = 1.2pt, draw]
\tikzstyle{unlabeledStyle}=[shape = circle, minimum size = 6pt, inner sep = 1.2pt, draw, fill]
\Vertex[style = labeledThree, x = 0.55, y = 0.75, L = \Large {$~$}]{v0}
\Vertex[style = labeledThree, x = 0.65, y = 0.75, L = \Large {$~$}]{v1}
\Vertex[style = labeledThree, x = 0.75, y = 0.85, L = \Large {$~$}]{v2}
\Vertex[style = labeledThree, x = 0.85, y = 0.75, L = \Large {$~$}]{v3}
\Vertex[style = labeledThree, x = 0.95, y = 0.75, L = \Large {$~$}]{v4}
\Vertex[style = labeledTwo, x = 0.85, y = 0.65, L = \Large {$\>$}]{v5}
\Vertex[style = labeledTwo, x = 0.65, y = 0.65, L = \Large {$\>$}]{v6}
\Vertex[style = labeledThree, x = 0.45, y = 0.75, L = \Large {$~$}]{v7}
\Vertex[style = labeledTwo, x = 0.45, y = 0.65, L = \Large {$\>$}]{v8}
\Edge[label = \Large {}, labelstyle={auto=right, fill=none}](v1)(v0)
\Edge[label = \tiny {}, labelstyle={auto=right, fill=none}](v1)(v2)
\Edge[label = \tiny {}, labelstyle={auto=right, fill=none}](v3)(v2)
\Edge[label = \tiny {}, labelstyle={auto=right, fill=none}](v3)(v4)
\Edge[label = \tiny {}, labelstyle={auto=right, fill=none}](v3)(v5)
\Edge[label = \tiny {}, labelstyle={auto=right, fill=none}](v5)(v0)
\Edge[label = \tiny {}, labelstyle={auto=right, fill=none}](v6)(v1)
\Edge[label = \tiny {}, labelstyle={auto=right, fill=none}](v7)(v0)
\Edge[label = \tiny {}, labelstyle={auto=right, fill=none}](v7)(v8)
\Edge[label = \tiny {}, labelstyle={auto=right, fill=none}](v4)(v8)
\end{tikzpicture}
}}
		\subfloat[]{
			\makebox[.25\textwidth]{
\begin{tikzpicture}[scale = 6]
\tikzstyle{VertexStyle} = []
\tikzstyle{EdgeStyle} = []
\tikzstyle{labeledStyle}=[shape = circle, minimum size = 6pt, inner sep = 1.2pt, draw]
\tikzstyle{unlabeledStyle}=[shape = circle, minimum size = 6pt, inner sep = 1.2pt, draw, fill]
\Vertex[style = labeledThree, x = 0.550000011920929, y = 0.75, L = \Large {$~$}]{v0}
\Vertex[style = labeledThree, x = 0.649999976158142, y = 0.75, L = \Large {$~$}]{v1}
\Vertex[style = labeledThree, x = 0.75, y = 0.849999994039536, L = \Large {$~$}]{v2}
\Vertex[style = labeledThree, x = 0.850000023841858, y = 0.75, L = \Large {$~$}]{v3}
\Vertex[style = labeledThree, x = 0.949999988079071, y = 0.75, L = \Large {$~$}]{v4}
\Vertex[style = labeledTwo, x = 0.850000023841858, y = 0.650000005960464, L =
\Large {$\>$}]{v5}
\Vertex[style = labeledTwo, x = 0.649999976158142, y = 0.650000005960464, L =
\Large {$\>$}]{v6}
\Vertex[style = labeledThree, x = 0.449999988079071, y = 0.75, L = \Large {$~$}]{v7}
\Vertex[style = labeledTwo, x = 0.449999988079071, y = 0.650000005960464, L =
\Large {$\>$}]{v8}
\Vertex[style = labeledThree, x = 1.04999995231628, y = 0.75, L = \Large {$~$}]{v9}
\Edge[label = \Large {}, labelstyle={auto=right, fill=none}](v1)(v0)
\Edge[label = \tiny {}, labelstyle={auto=right, fill=none}](v1)(v2)
\Edge[label = \tiny {}, labelstyle={auto=right, fill=none}](v3)(v2)
\Edge[label = \tiny {}, labelstyle={auto=right, fill=none}](v3)(v4)
\Edge[label = \tiny {}, labelstyle={auto=right, fill=none}](v3)(v5)
\Edge[label = \tiny {}, labelstyle={auto=right, fill=none}](v5)(v0)
\Edge[label = \tiny {}, labelstyle={auto=right, fill=none}](v6)(v1)
\Edge[label = \tiny {}, labelstyle={auto=right, fill=none}](v7)(v0)
\Edge[label = \tiny {}, labelstyle={auto=right, fill=none}](v7)(v8)
\Edge[label = \tiny {}, labelstyle={auto=right, fill=none}](v9)(v4)
\Edge[label = \tiny {}, labelstyle={auto=right, fill=none}](v9)(v8)
\end{tikzpicture}
}}
				
\subfloat[]{
\makebox[.25\textwidth]{
\begin{tikzpicture}[scale = 6]
\tikzstyle{VertexStyle} = []
\tikzstyle{EdgeStyle} = []
\tikzstyle{labeledStyle}=[shape = circle, minimum size = 6pt, inner sep = 1.2pt, draw]
\tikzstyle{unlabeledStyle}=[shape = circle, minimum size = 6pt, inner sep = 1.2pt, draw, fill]
\Vertex[style = labeledThree, x = 0.649999976158142, y = 0.75, L = \Large {$~$}]{v0}
\Vertex[style = labeledThree, x = 0.75, y = 0.75, L = \Large {$~$}]{v1}
\Vertex[style = labeledTwo, x = 0.75, y = 0.650000005960464, L = \Large {$\>$}]{v2}
\Vertex[style = labeledTwo, x = 0.649999976158142, y = 0.650000005960464, L = \Large {$\>$}]{v3}
\Vertex[style = labeledThree, x = 0.25, y = 0.75, L = \Large {$~$}]{v4}
\Vertex[style = labeledThree, x = 0.35, y = 0.75, L = \Large {$~$}]{v5}
\Vertex[style = labeledThree, x = 0.45, y = 0.75, L = \Large {$~$}]{v6}
\Vertex[style = labeledTwo, x = 0.35, y = 0.65, L = \Large {$\>$}]{v7}
\Vertex[style = labeledThree, x = 0.55, y = 0.85, L = \Large {$~$}]{v8}
\Edge[label = \Large {}, labelstyle={auto=right, fill=none}](v1)(v0)
\Edge[label = \Large {}, labelstyle={auto=right, fill=none}](v2)(v1)
\Edge[label = \Large {}, labelstyle={auto=right, fill=none}](v3)(v0)
\Edge[label = \Large {}, labelstyle={auto=right, fill=none}](v3)(v4)
\Edge[label = \Large {}, labelstyle={auto=right, fill=none}](v4)(v5)
\Edge[label = \tiny {}, labelstyle={auto=right, fill=none}](v6)(v5)
\Edge[label = \tiny {}, labelstyle={auto=right, fill=none}](v7)(v5)
\Edge[label = \tiny {}, labelstyle={auto=right, fill=none}](v8)(v0)
\Edge[label = \tiny {}, labelstyle={auto=right, fill=none}](v8)(v6)
\end{tikzpicture}
}}
\subfloat[]{
\makebox[.25\textwidth]{
\begin{tikzpicture}[scale = 6]
\tikzstyle{VertexStyle} = []
\tikzstyle{EdgeStyle} = []
\tikzstyle{labeledStyle}=[shape = circle, minimum size = 6pt, inner sep = 1.2pt, draw]
\tikzstyle{unlabeledStyle}=[shape = circle, minimum size = 6pt, inner sep = 1.2pt, draw, fill]
\Vertex[style = labeledThree, x =
0.600000023841858, y = 0.75, L = \Large {$~$}]{v0}
\Vertex[style = labeledThree, x =
0.699999988079071, y = 0.75, L = \Large {$~$}]{v1}
\Vertex[style = labeledTwo, x =
0.699999988079071, y = 0.650000005960464, L = \Large {$\>$}]{v2}
\Vertex[style = labeledTwo, x =
0.600000023841858, y = 0.650000005960464, L = \Large {$\>$}]{v3}
\Vertex[style = labeledThree, x =
0.300000011920929, y = 0.75, L = \Large {$~$}]{v4}
\Vertex[style = labeledThree, x =
0.400000005960464, y = 0.75, L = \Large {$~$}]{v5}
\Vertex[style = labeledTwo, x =
0.400000005960464, y = 0.650000005960464, L = \Large {$\>$}]{v6}
\Vertex[style = labeledThree, x = 0.5, y =
0.849999994039536, L = \Large {$~$}]{v7}
\Edge[label = \Large {}, labelstyle={auto=right, fill=none}](v1)(v0)
\Edge[label = \tiny {}, labelstyle={auto=right, fill=none}](v2)(v1)
\Edge[label = \tiny {}, labelstyle={auto=right, fill=none}](v3)(v0)
\Edge[label = \tiny {}, labelstyle={auto=right, fill=none}](v4)(v2)
\Edge[label = \tiny {}, labelstyle={auto=right, fill=none}](v5)(v4)
\Edge[label = \tiny {}, labelstyle={auto=right, fill=none}](v5)(v6)
\Edge[label = \tiny {}, labelstyle={auto=right, fill=none}](v7)(v0)
\Edge[label = \tiny {}, labelstyle={auto=right, fill=none}](v7)(v5)
\end{tikzpicture}
}}
\subfloat[]{
\makebox[.25\textwidth]{
\begin{tikzpicture}[scale = 6]
\tikzstyle{VertexStyle} = []
\tikzstyle{EdgeStyle} = []
\tikzstyle{labeledStyle}=[shape = circle, minimum size = 6pt, inner sep = 1.2pt, draw]
\tikzstyle{unlabeledStyle}=[shape = circle, minimum size = 6pt, inner sep = 1.2pt, draw, fill]
\Vertex[style = labeledThree, x = 0.65, y = 0.75, L = \Large {$~$}]{v0}
\Vertex[style = labeledThree, x = 0.75, y = 0.75, L = \Large {$~$}]{v1}
\Vertex[style = labeledThree, x = 0.85, y = 0.75, L = \Large {$~$}]{v2}
\Vertex[style = labeledTwo, x = 0.75, y = 0.65, L = \Large {$\>$}]{v3}
\Vertex[style = labeledTwo, x = 0.65, y = 0.65, L = \Large {$\>$}]{v4}
\Vertex[style = labeledTwo, x = 0.85, y = 0.65, L = \Large {$\>$}]{v5}
\Vertex[style = labeledThree, x = 0.35, y = 0.75, L = \Large {$~$}]{v6}
\Vertex[style = labeledThree, x = 0.45, y = 0.75, L = \Large {$~$}]{v7}
\Vertex[style = labeledTwo, x = 0.45, y = 0.65, L = \Large {$\>$}]{v8}
\Vertex[style = labeledThree, x = 0.55, y =
0.849999994039536, L = \Large {$~$}]{v9}
\Edge[label = \Large {}, labelstyle={auto=right, fill=none}](v1)(v0)
\Edge[label = \tiny {}, labelstyle={auto=right, fill=none}](v1)(v2)
\Edge[label = \tiny {}, labelstyle={auto=right, fill=none}](v3)(v1)
\Edge[label = \tiny {}, labelstyle={auto=right, fill=none}](v4)(v0)
\Edge[label = \tiny {}, labelstyle={auto=right, fill=none}](v5)(v2)
\Edge[label = \tiny {}, labelstyle={auto=right, fill=none}](v6)(v5)
\Edge[label = \tiny {}, labelstyle={auto=right, fill=none}](v7)(v6)
\Edge[label = \tiny {}, labelstyle={auto=right, fill=none}](v7)(v8)
\Edge[label = \tiny {}, labelstyle={auto=right, fill=none}](v9)(v0)
\Edge[label = \tiny {}, labelstyle={auto=right, fill=none}](v9)(v7)
\end{tikzpicture}
}}
\subfloat[]{
\makebox[.25\textwidth]{
\begin{tikzpicture}[scale = 6]
\tikzstyle{VertexStyle} = []
\tikzstyle{EdgeStyle} = []
\tikzstyle{labeledStyle}=[shape = circle, minimum size = 6pt, inner sep = 1.2pt, draw]
\tikzstyle{unlabeledStyle}=[shape = circle, minimum size = 6pt, inner sep = 1.2pt, draw, fill]
\Vertex[style = labeledThree, x = 0.70, y = 0.75, L = \Large {$~$}]{v0}
\Vertex[style = labeledThree, x = 0.80, y = 0.75, L = \Large {$~$}]{v1}
\Vertex[style = labeledThree, x = 0.90, y = 0.75, L = \Large {$~$}]{v2}
\Vertex[style = labeledTwo, x = 0.80, y = 0.65, L = \Large {$\>$}]{v3}
\Vertex[style = labeledTwo, x = 0.70, y = 0.65, L = \Large {$\>$}]{v4}
\Vertex[style = labeledTwo, x = 0.90, y = 0.65, L = \Large {$\>$}]{v5}
\Vertex[style = labeledThree, x = 0.30, y = 0.75, L = \Large {$~$}]{v6}
\Vertex[style = labeledThree, x = 0.40, y = 0.75, L = \Large {$~$}]{v7}
\Vertex[style = labeledThree, x = 0.5, y = 0.75, L = \Large {$~$}]{v8}
\Vertex[style = labeledTwo, x = 0.5, y = 0.65, L = \Large {$\>$}]{v9}
\Vertex[style = labeledThree, x = 0.60, y = 0.85, L = \Large {$~$}]{v10}
\Edge[label = \Large {}, labelstyle={auto=right, fill=none}](v1)(v0)
\Edge[label = \Large {}, labelstyle={auto=right, fill=none}](v1)(v2)
\Edge[label = \Large {}, labelstyle={auto=right, fill=none}](v3)(v1)
\Edge[label = \Large {}, labelstyle={auto=right, fill=none}](v4)(v0)
\Edge[label = \Large {}, labelstyle={auto=right, fill=none}](v5)(v2)
\Edge[label = \Large {}, labelstyle={auto=right, fill=none}](v6)(v5)
\Edge[label = \Large {}, labelstyle={auto=right, fill=none}](v6)(v7)
\Edge[label = \tiny {}, labelstyle={auto=right, fill=none}](v8)(v7)
\Edge[label = \tiny {}, labelstyle={auto=right, fill=none}](v8)(v9)
\Edge[label = \tiny {}, labelstyle={auto=right, fill=none}](v10)(v0)
\Edge[label = \tiny {}, labelstyle={auto=right, fill=none}](v10)(v8)
\end{tikzpicture}
}}

\subfloat[]{
\makebox[.25\textwidth]{
\begin{tikzpicture}[scale = 6]
\tikzstyle{VertexStyle} = []
\tikzstyle{EdgeStyle} = []
\tikzstyle{labeledStyle}=[shape = circle, minimum size = 6pt, inner sep = 1.2pt, draw]
\tikzstyle{unlabeledStyle}=[shape = circle, minimum size = 6pt, inner sep = 1.2pt, draw, fill]
\Vertex[style = labeledThree, x = 0.70, y = 0.70, L = \Large {$~$}]{v0}
\Vertex[style = labeledThree, x = 0.80, y = 0.70, L = \Large {$~$}]{v1}
\Vertex[style = labeledThree, x = 0.90, y = 0.70, L = \Large {$~$}]{v2}
\Vertex[style = labeledTwo, x = 0.80, y = 0.60, L = \Large {$\>$}]{v3}
\Vertex[style = labeledTwo, x = 0.70, y = 0.60, L = \Large {$\>$}]{v4}
\Vertex[style = labeledTwo, x = 0.90, y = 0.60, L = \Large {$\>$}]{v5}
\Vertex[style = labeledThree, x = 0.30, y = 0.70, L = \Large {$~$}]{v6}
\Vertex[style = labeledThree, x = 0.40, y = 0.70, L = \Large {$~$}]{v7}
\Vertex[style = labeledThree, x = 0.5, y = 0.70, L = \Large {$~$}]{v8}
\Vertex[style = labeledTwo, x = 0.30, y = 0.60, L = \Large {$\>$}]{v9}
\Vertex[style = labeledTwo, x = 0.5, y = 0.60, L = \Large {$\>$}]{v10}
\Vertex[style = labeledTwo, x = 0.40, y = 0.60, L = \Large {$\>$}]{v11}
\Vertex[style = labeledThree, x = 0.60, y = 0.80, L = \Large {$~$}]{v12}
\Edge[label = \Large {}, labelstyle={auto=right, fill=none}](v1)(v0)
\Edge[label = \Large {}, labelstyle={auto=right, fill=none}](v1)(v2)
\Edge[label = \tiny {}, labelstyle={auto=right, fill=none}](v3)(v1)
\Edge[label = \tiny {}, labelstyle={auto=right, fill=none}](v4)(v0)
\Edge[label = \tiny {}, labelstyle={auto=right, fill=none}](v5)(v2)
\Edge[label = \tiny {}, labelstyle={auto=right, fill=none}](v6)(v7)
\Edge[label = \tiny {}, labelstyle={auto=right, fill=none}](v8)(v7)
\Edge[label = \tiny {}, labelstyle={auto=right, fill=none}](v8)(v10)
\Edge[label = \tiny {}, labelstyle={auto=right, fill=none}](v9)(v6)
\Edge[label = \tiny {}, labelstyle={auto=right, fill=none}](v11)(v7)
\Edge[label = \tiny {}, labelstyle={auto=right, fill=none}](v12)(v0)
\Edge[label = \tiny {}, labelstyle={auto=right, fill=none}](v12)(v8)
\end{tikzpicture}}}
\caption{Extra reducible configurations.\label{extra-pic}}
\end{center}
\end{figure}
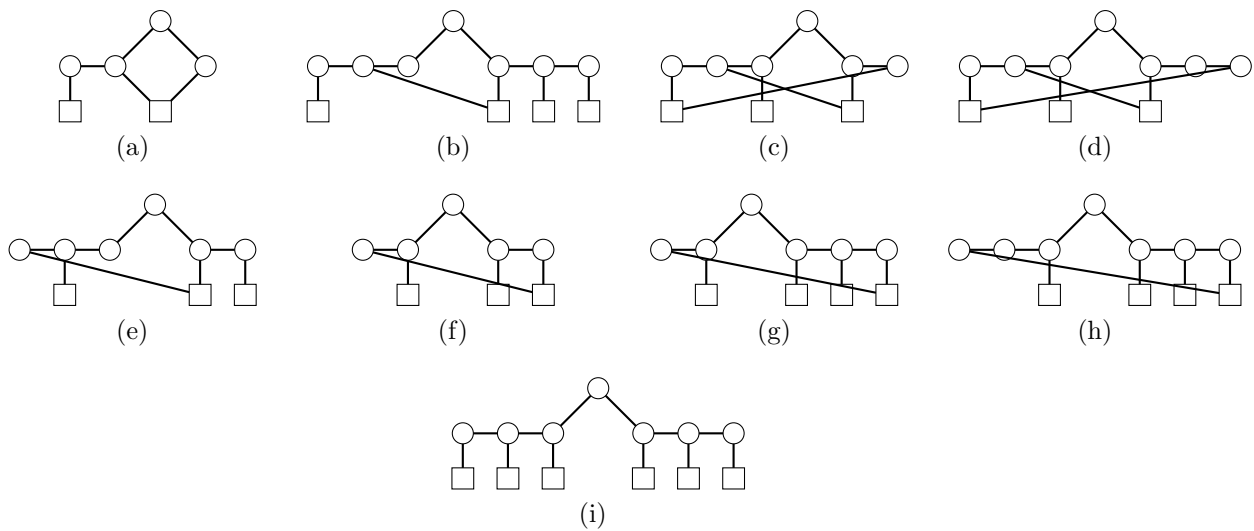

The reducible configurations in this paper were originally found by computer. 
The computer uses an abstract definition capturing the notion of ``colorable
after performing some Kempe swaps'', which frees it from considering an
embedding in an ambient critical graph.  This is called \emph{fixability} and
extends the idea in \cite{HallGame} from stars to arbitrary graphs.  The
computer is able to prove many reducibility results for which we have yet to find
short proofs.  Here we show how to use some of these reducible
configurations to further improve the bound on the average degree of
$3$-critical graphs. We give a larger survey of what can be proved with these
computer results in \cite{FixabilitySurvey}.

To conclude the paper we prove, modulo computer verification, that every
3-critical graph has $p(G)\le 9$.  This is the final piece needed for our
improved bound on $a(G)$ in Theorem~\ref{thm:main2}.

\begin{lem}\label{225}
The configurations in Figure \ref{extra-pic} are not subgraphs of any
3-critical graph.  
In particular, $p(G)\le 9$.
%all rich vertices are of type $(2^-, 2^-, 5^-)$.
\end{lem}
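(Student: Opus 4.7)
The plan is to extend the board-based reducibility technique of Section~\ref{fixer-reducibility} to each of the nine configurations in Figure~\ref{extra-pic}. For each such configuration $F$, criticality of $G$ gives a proper 3-coloring $\varphi$ of all the edges of $G$ that are not edges of $F$. I would then enumerate all possible boards — tuples recording which color each boundary vertex of $F$ sees on its incident colored edge — and for each board exhibit either a direct extension of $\varphi$ to the edges of $F$, or a sequence of Kempe swaps that produces a board on which a direct extension succeeds. The seven half-configuration colorings of Figure~\ref{halfs-pic}, used in the proof of Lemma~\ref{tree1-lemma}, can be reused as extension templates, since each of the configurations in Figure~\ref{extra-pic} decomposes into one or two pieces of the same local shape.

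First I would dispose of the smaller configurations, such as (a), (e), and (f), each of which has only a handful of boundary half-edges. By the Polya count used in Section~\ref{fixer-reducibility}, the number of boards is of order ten to forty, and direct case analysis in the style of Lemma~\ref{two-jellyfish} handles each board either greedily along a path of uncolored edges or via a single $(x,y)$-Kempe swap at a boundary vertex. Configurations (b), (g), (h), and (i) — each of which contains a rich vertex sitting between two components of $\Hc$ of order three or more — are handled next. Here the key observation is that, after coloring the edges incident to the central rich vertex, the remaining edges split into two independent pieces, each equivalent to the configuration of Figure~\ref{tree1-pic2} or its truncations; this reduces the analysis to choosing the two colors at the center so that neither side is a bad board for the purposes of Lemma~\ref{tree1-lemma}, then applying that lemma directly.

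The main obstacle is the combinatorial explosion of boards for the largest configurations, together with the fact that a board that is not immediately colorable may require a branching analysis on the structure of the Kempe chains. This is exactly the reason these reductions were originally found by computer: one must group boards by their leading pattern, perform an $(x,y)$- or $(x,z)$-Kempe swap at a carefully chosen boundary vertex, and then split into sub-cases by the possible Kempe-pair of that vertex, iterating if necessary. My plan for a human proof is to follow the organization of Cases~1 and~2 in the proof of Lemma~\ref{tree1-lemma}: reduce each board, after at most two or three swaps, to one already shown colorable.

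Finally I would derive the ``in particular'' conclusion. By Claims~\ref{clm3}, \ref{clm4}, and~\ref{clm5}, every rich vertex $v$ has type $(a,b,c)$ with $a\le b\le c$, $c\le 5$, and $b\le 3$. The configurations in Figure~\ref{extra-pic} are designed to exhaust, up to symmetry and up to the allowed identifications of 2-vertices permitted by Claim~\ref{clm1}, every way a rich vertex can be incident to an $\Hc$-component of order exactly $3$ while simultaneously being incident to a second $\Hc$-component. Forbidding each of them therefore forces $b\le 2$, so every rich vertex has type $(2^-,2^-,5^-)$, as claimed.
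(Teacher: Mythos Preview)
Your overall strategy---enumerate boards and resolve each by a direct extension or a sequence of Kempe swaps---is exactly the paper's approach; indeed, the paper explicitly states that this case analysis was carried out by computer and that the resulting human-readable proof runs to roughly 100 pages. So you are not proposing a different method, only a hand-executed version of the same one.

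The gap is that the specific shortcuts you propose do not apply. The half-colorings of Figure~\ref{halfs-pic} and the machinery of Lemma~\ref{tree1-lemma} are tailored to a half consisting of \emph{two} consecutive 3-vertices in $\Hc$, each with its own pendant 2-vertex. The halves arising in Figure~\ref{extra-pic}(i), and in the pieces of (b), (g), (h), contain \emph{three} consecutive 3-vertices of $\Hc$, so neither the extension templates nor the board analysis of Lemma~\ref{tree1-lemma} transfer. Moreover, configurations (a) and (c)--(h) are precisely the cases where the two $\Hc$-components share one or more 2-neighbors, creating cycles; these are not ``truncations'' of the tree in Figure~\ref{tree1-pic2} and must be analyzed separately, each with its own set of boards and Kempe-chain branchings. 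Your estimate of ``at most two or three swaps'' per board is not supported and, given the paper's page count, is almost certainly too optimistic.

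Your derivation of the ``in particular'' statement is essentially the paper's, but note a small imprecision: to force $b\le 2$ you must forbid a rich vertex adjacent to \emph{two} $\Hc$-components each of order at least $3$ (type $(*,3^+,3^+)$), not merely one component of order $3$ together with any second component. The paper's argument is that if no 2-vertex is shared between those two components one gets (i), and otherwise one of (a)--(h).
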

\begin{proof}
The first statement is proved by computer.  We prove the second statement,
assuming the first.
Suppose that $v$ is a rich vertex with neighbors in poor fragments $H_1$, $H_2$,
$H_3$.  If at most one $H_i$ has order at least 3, then $p(v)\le 2+2+5=9$.
Suppose instead that $H_1$ and $H_2$ have order at least 3.
If $H_1$ and $H_2$ have no common 2-neighbors, then $G$ has a copy of
Figure~\ref{extra-pic}(i).  Otherwise, the poor fragments share one or more common
2-neighbors, and $G$ contains one of Figure~\ref{extra-pic}(a)--(h).
Choose a common 2-neighbor $w$ such that the shortest cycle $C$ through $v$ and
$w$ is as short as possible.  
If $C$ has length 4, then we have Figure~\ref{extra-pic}(a); 
if length 5, then one of Figures~\ref{extra-pic}(b--d); 
if length 6, then one of Figures~\ref{extra-pic}(e--f); 
if length 7, then Figure~\ref{extra-pic}(g);
if length 8, then Figure~\ref{extra-pic}(h).
The computer is able to generate proofs in \LaTeX, but at about 100 pages this
one is not a fun read:
\url{https://dl.dropboxusercontent.com/u/8609833/Papers/big%20tree.pdf}
\end{proof}

%\begin{mainthm2}
%Let $G$ be a critical graph with $\Delta=3$.  If $G$ is neither $P^*$ nor the
%\hajos\ join of two copies of $P^*$, then $G$ has average degree $a(G)\ge
%\frac{84}{31}\approx 2.710$.
%%If a graph $G$ with $\Delta=3$ is critical, then either $2|E(G)| \ge
%%(2+\frac{22}{31})|V(G)|$ or else either $G$ is $P^*$ or $J_1$.
%\end{mainthm2}
%\begin{proof}
%By Lemma \ref{225}, we have $a + b + c \le 9$ for every rich vertex of type
%$(a,b,c)$.  So, we obtain our desired bound when the three quantities $2 +
%\alpha$, $3 - \frac{\alpha}{2} + 2\beta$ and $3 - 9\beta$ are equal.  This
%happens when $\alpha = \frac{22}{31}$ and $\beta = \frac{1}{31}$, which yields
%$2|E(G)| \ge (2 + \frac{22}{31})|V(G)|$.
%\end{proof}

\section*{Acknowledgments}
Thanks to two anonymous referees for their constructive feedback.  In
particular, the extensive comments of one referee, subsequently identified as
Douglas Woodall, led to a significantly improved exposition, the insertion and
proof of Lemmas 8 and 10, a much simpler proof of Lemma~\ref{lem:fig2a}, and a
slightly stronger bound in Theorem~\ref{thm:main1} (showing $p(G)\le 10$,
rather than $p(G)\le 11$ as we had done).
\bibliographystyle{amsplain}
\bibliography{FixerBreaker}
\end{document}